	\newcommand{\blind}{0}
    \renewcommand\section{\@startsection {section}{1}{\z@}%
                                       {-3.5ex \@plus -1ex \@minus -.2ex}%
                                       {2.3ex \@plus.2ex}%
                                       {\normalfont\fontfamily{phv}\fontsize{16}{19}\bfseries}}
    \renewcommand\subsection{\@startsection{subsection}{2}{\z@}%
                                         {-3.25ex\@plus -1ex \@minus -.2ex}%
                                         {1.5ex \@plus .2ex}%
                                         {\normalfont\fontfamily{phv}\fontsize{14}{17}\bfseries}}
    \renewcommand\subsubsection{\@startsection{subsubsection}{3}{\z@}%
                                        {-3.25ex\@plus -1ex \@minus -.2ex}%
                                         {1.5ex \@plus .2ex}%
                                         {\normalfont\normalsize\fontfamily{phv}\fontsize{14}{17}\selectfont}}
\newtheorem{theorem}{Theorem}
\newtheorem{prop}[theorem]{Proposition}
\newtheorem{obs}[theorem]{Observation}
\begin{document}
		
		\def\spacingset#1{\renewcommand{\baselinestretch}%
			{#1}\small\normalsize} \spacingset{1}
		
		\if0\blind
		{
			\title{\bf Use statistical analysis to approximate integrated order batching problem}
			\author{Sen Xue and Chuanhou Gao\\
			School of Mathematical Sciences, Zhejiang University, Hangzhou, China}
			\date{}
			\maketitle
		} \fi
		
		\if1\blind
		{

            \title{\bf \emph{IISE Transactions} \LaTeX \ Template}
			\author{Author information is purposely removed for double-blind review}
			
\bigskip
			\bigskip
			\bigskip
			\begin{center}
				{\LARGE\bf \emph{IISE Transactions} \LaTeX \ Template}
			\end{center}
			\medskip
		} \fi
		\bigskip

\begin{abstract}
Order picking and order packing entail retrieving items from storage and packaging them according to customer requests. These activities have always been the main concerns of the companies in reducing warehouse management costs. This paper proposes and investigates the Order Batching and Order Packing Problem, which considers these activities jointly. The authors propose a novel statistic-based framework, namely, the Max Correlation Reformulation problem, to find an approximation mixed-integer programming model. An approximation model is found within this framework in two phases. A lower dimension model is firstly proposed. Efforts are then made to increase its correlation coefficient with the original formulation. Finally, a powerful pairs swapping heuristics is combined with the approximation model. Numerical experiments show that this newly found approach outperforms the mainstream methods from the literature. It is demonstrated that this proposed method could significantly reduce the cost of picking and packing operations in a warehouse.
\end{abstract}

	\noindent%
	{\it Keywords:} Logistics; Warehouse; Order batching; Large scale optimization; Monte Carlo method.

	\spacingset{1.5} 

\maketitle

\section{Introduction} \label{s:intro}
Warehouses are a vital component of any supply chain (SC) system. They serve as receiving ends, storage and order picking locations and shipping departure points in SC systems. Moreover, today's companies are encountering competition in SC systems to meet customers' various needs. According to \cite{manzini2012warehousing}, the competition is based on time, cost, and customer service level. Warehouses, as a logistics transfer point and cost-intensive part, could largely determine the SC's overall performance. As a result, warehouse management is one of the most important aspects of an SC.

Warehouse management encompasses a series of non-independent product handling operations, including receiving, storage, order picking, packing, and shipping \citep{hompel2006warehouse}. The receiving, storage, and shipping activities are connected with logistics and warehouse design. The order picking process, defined as retrieving products from storage locations, is the most labor-intensive and costly procedure. According to \cite{bartholdi2014warehouse}, the picking consumes around 55\% of the total warehouse cost. Following close to the allocation of orders, the packing process, which also involves manual labor, concerns the packaging and sortation of products. This operation sequence indicates that picking and packing are strongly associated and should be jointly considered. As \cite{ackerman2012practical} mentioned, considering picking and packing together would provide a more accurate description of warehouse operations and avoid extra storage buffers.

The order batching problem (OBP) occurs in the order picking process when there exists a capacity constraint in picking facilities. Vast pieces of literature have investigated the OBP in picking separately. The cost function, which needs to be minimized, is mostly considered as the travel distance of item pickers in a picker-to-parts system, in which the item picker moves to storage locations to pick up items. The picker-to-parts system is more widespread than the part-to-picker system, in which the items are automatically retrieved to the picker, and is also considered in this paper.

Comparatively, a limited number of publications consider the OBP in the packing process and its integrated form with other activities. \cite{shiau2010warehouse} proposed a hybrid algorithm for picking and packing operations to generate a picking sequence. This algorithm aims to eliminate storage buffers and reduce total operation time. \cite{de2012determining} provided an integer-programming model in which they reduce the total picking and packing time by determining the optimal number of zones (i.e. a picking area in a warehouse assigned to a single item picker). \cite{boysen2018optimizing} devised an elementary optimization problem to shorten the packing time by minimizing the spread of orders in the release sequence. All the studies above are based on real-world cases and deeply related to problem characteristics. We also give an order batching problem integrated with the packing process, based on a practical warehouse case of one of China's largest logistics companies.

The OBP has a rather vital characteristic in practice: it is repeatedly solved corresponding to varying customer demands but in the same problem environment (e.g. the warehouse layout and the item storage location). This specialty inspires us to employ statistical tools to exploit it and develop approximation algorithms. Some efforts have been made in machine learning to learn features of optimization problems that have the same structure but different parameters \citep{bertsimas2021voice}. However, to the best of our knowledge, no similar work has been done on the OBP. 

\subsection{\emph{Related work}}
Until now, the majority of OBP solution approaches have been heuristics and metaheuristics, which are widely employed in practice. Only a few works attempt to solve OBP to or near optimality. There are also limited number of methods for combining statistics and data mining principles. One main reason is that the industry emphasizes the timeliness of an algorithm much more than the accuracy of the solution. The computing time of the B\&B algorithm makes it unpractical for medium or large-scale order batching problems. However, in this paper, we give a framework to find approximation models and use B\&B algorithms to get near-optimal solutions. To the best of our knowledge, no similar framework has been proposed in this area.

Heuristic algorithms consider defined rules for assigning orders. \cite{gibson1992order} introduced the First-Come-First-Served (FCFS) rule for OBP, which is most straightforward.  \cite{elsayed1981algorithms,elsayed1989order} firstly developed the seed algorithm. The seed algorithm uses selection rules to choose the starting order for each batch, then uses addition rules to add the remaining orders. The rules are formulated based on the problem characteristics. The saving algorithm, which was initially proposed by \cite{clarke1964scheduling} for the Vehicle Routing Problem, has evolved into many OBP variants, among which C\&W(i) and  C\&W(ii) are mostly mentioned. 

Metaheuristics are mostly common algorithms modified to OBP. \cite{albareda2009variable} developed a variable neighborhood search (VNS) based approach with four different warehouse configurations. \cite{henn2010metaheuristics} presented two methods, an iterated local search (ILS) and a rank-based ant system. \cite{henn2012tabu} proposed tabu search (TS) to minimize the total length of picking routes. 

\cite{tang2011combination} proposed a Lagrangian relaxation and column generation-based MIP approach. Following that, \cite{muter2015exact} developed a column generation-based exact method. The disadvantage of exact methods is that they cannot be extended to larger application instances (e.g. more than 100 orders). However, in this paper, we improve upon this situation by introducing an approximation MILP approach.

\cite{chen2005association} produced the first method combining data mining and MIP for OBP. They introduced the association rule mining to OBP and developed a MIP for maximizing order association in a batch and indirectly obtaining high-quality solutions with small total distances. \cite{ho2006study} proposed several batching methods established on seed-order selection rules (SOSR) and accompanying-order selection rules (AOSR). A subsequent study by \cite{ho2008order} on this topic gave new rules and interactions between SOSR and AOSR.

\subsection{\emph{Contributions and Paper Structure}}

In this paper, our contributions are as follows:

1. We propose and formulate a new order batching and order packing problem (OBOPP). 

2. We present a novel statistical-based framework to reformulate the mixed-integer programs(MIP) into easy-to-solve approximation mixed-integer linear programs (MILP). 

3. We provide two directions for reformulation: to decrease the scale and to increase the correlation value. 

4. We develop a problem-adapted order pairs swapping heuristic to further strengthen our approach. 

5. We conduct comparison tests to show the timeliness and accuracy of our algorithm. 

This paper is organized as follows. Section \ref{des} presents a description and model formulations of the OBOPP. Section \ref{sat} introduces a statistics-based framework to maintain an approximation MILP for OBOPP. Section \ref{sol} shows a heuristical strategy to obtain the approximation MILP. Section \ref{int} gives an iterated local search method for reinforcement. Section \ref{exp} examines our methods with the exact B\&B algorithm and classic heuristics of OBP. Finally, Section \ref{con} concludes the study and gives future research directions.

\section{The Order Batching and Order Packing problem}
\label{des}
\subsection{\emph{Problem Description}}

In this subsection, we describe the order batching and order packing problem (OBOPP). A warehouse is composed of parallel aisles and a packing area with a fixed number of packers. The packing area is in the middle right of the last aisle (See Figure \ref{layout}). This layout is similar to \cite{de2012determining}. In this study, the aisles are lined in length-descending order, from the left to the right side of the warehouse. This is a small traditional layout modification that arranges frequently ordered item classes to be stored on short aisles, leading to generally lower picking route distances. Our model considering this specialty can also be compatible with a traditional layout. For the relationship between orders and items, we follow the environment set by \cite{de1999efficient}. Each item has a unique storage location. An order, which represents a requirement of our customers, contains one or more items. The same items may appear in multiple orders at the same time.

The OBOPP can be described as a problem of finding a batching combination for the least cost of traveling time and packing time. At the beginning of each batch, the item-pickers depart from the depot point, travel down the aisles that index the locations of items, and collect the items. Then, the item-pickers carry the collected items to the packing point, where they are registered and sorted into orders. The time cost of traveling is the total walking length divided by the average speed. The time cost of registering and sorting is proportional to the number of unique items in practice. This is coherent because if all of the items are the same, the packing procedure can be completed with just one registration and partition by each order number. For multiple type items, the registration and partition processes are needed for each item type. Finally, we use the total traveling distance and the sum of unique items in each batch, multiplied by their respective weights, to represent the overall time cost. 

The batch size is limited by the packing point capability (i.e. a fixed number of packers) for packing orders to maintain a steady order flow from picking to packing, implying that each batch has a fixed number of orders. In this study, the capacity of the pickers' basket is always larger than the sum of items from the batch containing the largest orders. As a result, we consider the item-pick capacity to be a redundant restriction and omit it in our model. 

The manual pickers use the return strategy and the S-shape strategy for routing in practice. We follow the definitions of \cite{gu2007research} and \cite{de2007design}. In OBOPP, these strategies are characterized as follows (See Figure \ref{layout} for illustration). First, the order picker can only move in vertical or horizontal directions. For the return strategy, the order picker enters an aisle containing one or more items to be picked. He/she walks down the aisle until the final item required is picked, then returns to the bottom line to the next aisle. The S-shape strategy provides a solution in which the order picker traverses through an aisle containing one or more items to be collected. Then, he/she moves horizontally to the next aisle. If an item picker has completed all the items in a batch, he/she will move to the packing area to unload the items.

The OBOPP is NP-hard. The general OBP has been proven to be NP-hard \cite{gademann2005order}. In an OBOPP, if we set every item to be the same type, the packing time cost becomes constant, and the problem degenerates into a generic OBP. Thus, the OBOPP is at least as hard as OBP and then proven to be NP-hard.
\begin{figure}[htb]
	\centering
	\begin{minipage}{.5\textwidth}
		\centering
		\includegraphics[scale=0.18]{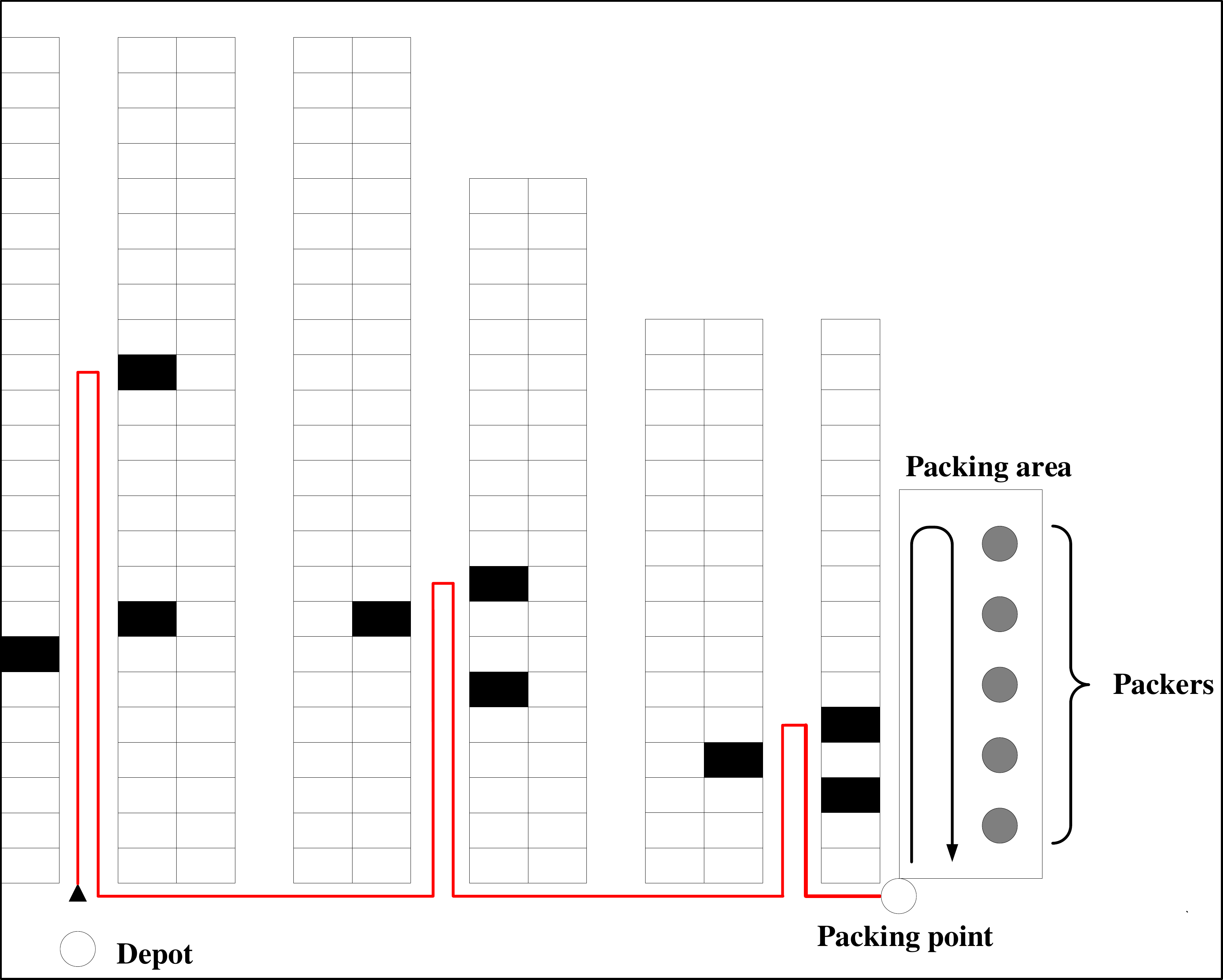}
		\label{rReturn}
	\end{minipage}%
	\begin{minipage}{.5\textwidth}
		\centering
		\includegraphics[scale=0.18]{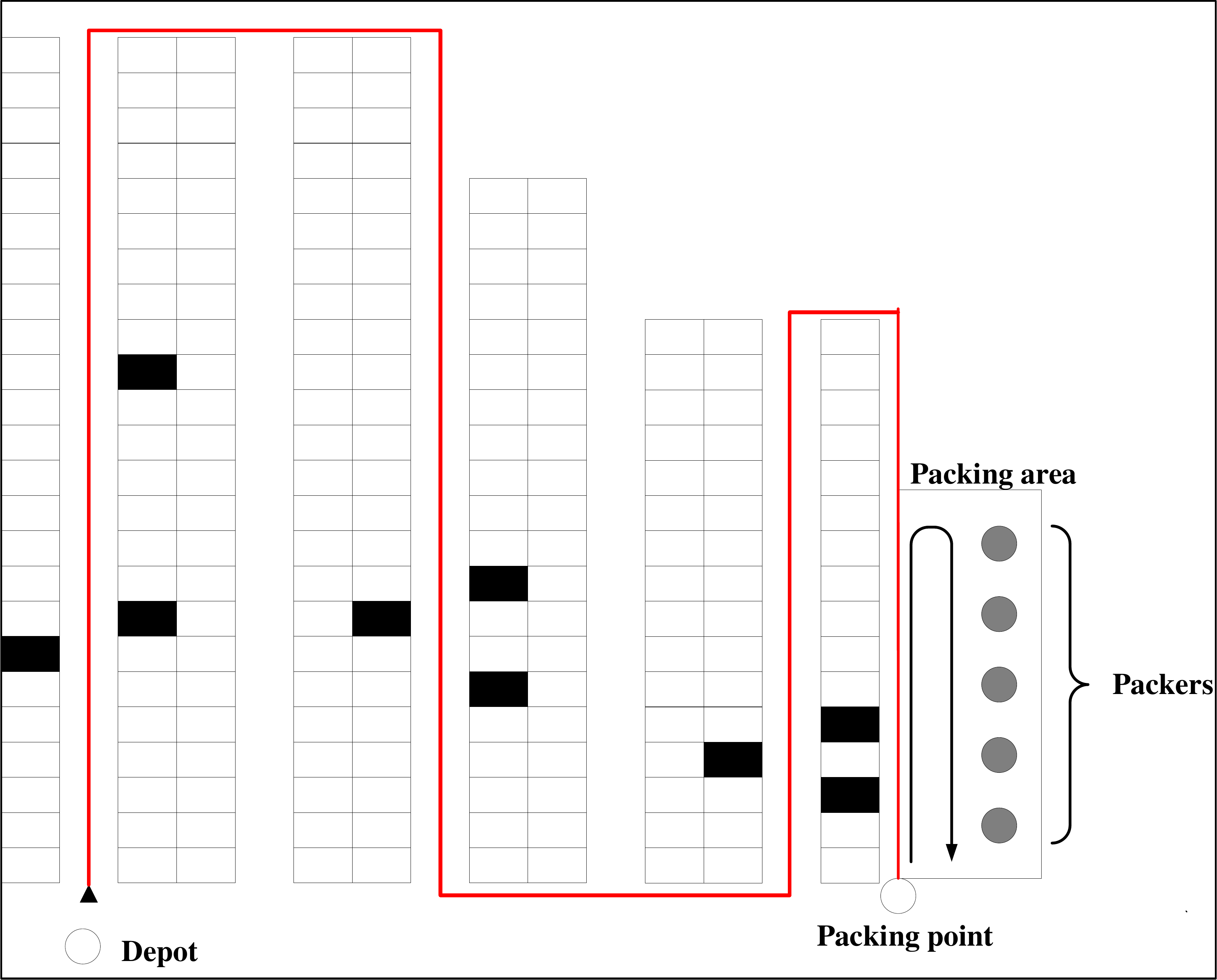}
		\label{rSshape}
	\end{minipage}
\caption{Example of Return routing strategy (left) and S-shape routing strategy (right) in the warehouse layout}
\label{layout}
\end{figure}
\subsection{\emph{Model Formulation}}

For the OBOPP defined above,  We give two MIP models corresponding to the return strategy and the S-shape strategy. The models' notations are listed below:

\quad \\

Parameters and indices:

\begin{tabular}{ll}
\hline

$I$, $i$ & the set of custom orders and its index $i\in I$;\\
$K$, $k$& the set of items(i.e. the set of items that appears in any of the orders) and its index $k \in K$;\\
$B$, $b$& the set of aisles and its index $b \in B$;\\
$J$, $j$& the set of feasible batches and its index $j \in J$;\\
$L$& the set of lengths of the aisles, where L=$\{l_1, l_2,...,l_{|B|}\}$;\\
$I_k$& the set of orders that include item k;\\
$I_b$& the set of orders that include one or more item that located in aisle b;\\
$r_{ib}$& $\max \{$t $\mid$ all items in order i that located at (b,t)$\}$;\\
$C$& capacity of the packing point;\\
$\tau$& weight of order packing time-consuming.\\
\hline
\end{tabular}

\quad \\

Variables:

$y_{ij}=\left\{\begin{array}{ll}1, & \text { if order $i$ is assigned to batch $j$  } \\ 0, & \text { otherwise }\end{array}\right.$

$z_{jk}=\left\{\begin{array}{ll}1, & \text { if item $k$ is in an order that assigned to batch $j$ } \\ 0, & \text { otherwise }\end{array}\right.$

$\delta_{jb}=\left\{\begin{array}{ll}1, & \text { if there exists item $k$ that located on aisle $b$ is assigned to batch $j$ } \\ 0, & \text { otherwise }\end{array}\right.$

$d_{jb}$=$\max \{ r_{ib} | y_{ij}=1\}$

$q_{jb}$=$\left\{\begin{array}{ll}n, & \text {if aisle $b$ is the nth aisle passed in batch j} \\ 0, & \text { otherwise }\end{array}\right.$

$o_{jb}$=$\left\{\begin{array}{ll}1, & \text {if aisle $b$ is the nth aisle passed in batch j and n is an odd number} \\ 0, & \text { otherwise }\end{array}\right.$

$u_{jb}$: an auxiliary variable to decide the parity of $w_{jb}$, which satisfies $q_{jb}=2 u_{jb}+o_{jb}$

The OBOPP with the return strategy can be stated as the following MIP:

\begin{equation}
    \min \{\tau \sum_{j \in J} \sum_{k \in K} z_{jk}+(1-\tau)  \sum _{j \in J} \sum_{b \in B}2 d_{jb} : (y,z,d) \in P^R _{IJKB}\}
    \label{eq0}
\end{equation}

where polytope $P^R _{IJKB}$ is given by: 
\begin{align}
    \sum_{j \in J} y_{ij} &=1, \quad \forall i \in I;
        \label{eq1}\\
    \sum_{i \in I} y_{ij} &= C, \quad \forall j \in J;
        \label{eq2}\\
    M_k z_{j,k} &\geq \sum_{i \in I_k} y_{ij}, \quad \forall j \in J, \quad \forall k \in K;
        \label{eq3}\\
     d_{jb} &\geq  r_{ib} y_{ij}, \quad \forall j \in J, \quad \forall b \in B;\label{eq4}\\
    y_{ij} &\in \{0,1\},\quad \forall i \in I,\quad \forall j \in J;\label{yd}\\
    z_{j,k} &\in \{0,1\},\quad \forall j \in J, \quad \forall k \in K;\label{zd}\\
    d_{jb} &\in R_+, \quad \forall j \in J, \quad \forall b \in B.\label{db}
\end{align}

The objective function in ($\ref{eq0}$) aims to minimize the total number of unique items in every batch and the total distance of the item-picker's picking path. The weight number $\tau$ represents how much percentage of all time cost is used in order packing procedure measured by the number of unique items. The constraint ($\ref{eq1}$) ensures that each order is assigned to only one batch. The constraint ($\ref{eq2}$) guarantees that each batch does not exceed its capacity limit. The big-M constraint ($\ref{eq3}$) is used to ensure that $z_{jk}$ indicates whether item $k$ is included in batch $j$. The constraint ($\ref{eq4}$) provides the walking length $d_{jb}$ for batch $j$ within aisle $b$. At last, constraints ($\ref{yd}$)-($\ref{db}$) define the domain of decision variables. 

The OBOPP with the S-shape strategy are stated as follows:

\begin{equation}
    \min \{\tau \sum_{j \in J} \sum_{k \in K} z_{jk}+(1-\tau) \sum _{j \in J} \sum_{b \in B} 2 l_b o_{jb}: (y,z,\delta,q,o,u)\in P^S _{IJKB}\}
    \label{objs}
\end{equation}
where polytope $P^S _{IJKB}$ is given by:
\begin{align}
    (\ref{eq1})&-(\ref{eq3});\\
    M_b \delta_{jb} &\geq \sum_{i \in I_b} y_{ij}, \quad \forall j \in J, \quad \forall b \in B;\label{Ms}\\
	q_{jb} &\leq \sum_{b \in B} \delta_{jb} , \quad \forall j \in J, \quad \forall b \in B;\label{upperboundw}\\
	q_{jb} &\geq \delta_{jb} , \quad \forall j \in J, \quad \forall b \in B;\label{lowerboundw}\\
	q_{j(b+1)}-q_{jb}&=q_{j(b+1)}(1-\delta_{jb})+\delta_{jb},\quad \forall j \in J, \quad \forall b \in B\backslash\{b_{|B|}\};\label{middleboundw}\\
	q_{jb}&=2u_{jb}+o_{jb}\quad \forall j \in J, \quad \forall b \in B;\label{oddbound}\\
	(\ref{yd})&-(\ref{zd});\\
    \delta_{jb} &\in \{0,1\},\quad \forall j \in J, \quad \forall b \in B;\label{deltad}\\
	q_{jb} &\in Z,\quad \forall j \in J, \quad \forall b \in B;\label{q}\\
	u_{jb} &\in Z,\quad \forall j \in J, \quad \forall b \in B;\label{u}\\
	o_{jb} &\in  \{0,1\},\quad \forall j \in J, \quad \forall b \in B.\label{o}
\end{align}
The decision variable  $\delta_{jb}$ in constraint ($\ref{Ms}$) determines whether a aisle $b$ is passed. Constraints ($\ref{upperboundw}$) and ($\ref{lowerboundw}$) give upper bound and lower bound for variable $q_{jb}$. Non-linear constraint ($\ref{middleboundw}$) guarantees that all the non-zero ordinal number $q_{jb}$ of a batch j is an arithmetic sequence with a difference of one. Constraint ($\ref{oddbound}$) ensures that $o_{jb}$ indicates whether or not $q_{jb}$ is an odd number. Constraint ($\ref{deltad}$)-($\ref{o}$) gives domains of variables $\delta_{jb}$, $q_{jb}$, $u_{jb}$ and $o_{jb}$.

The dimensions of conv($P^R _{IJKB}$) and conv($P^S _{IJKB}$) are large. The increment of dimensions is significantly contributed by adding auxiliary variables. This causes the classic B\&B method's search time to explode, rendering it impractical for larger cases (e.g. more than 100 orders). 

However, only variable $y_{ij}$ determines the outcome of this order batching problem,i.e., partitioning orders into separate batches. Let $M_{IJ}$ be the polytope on the partition of orders.

\begin{equation}
	M_{IJ}=\{ y_{ij} \in \{0,1\} :  \sum_{j \in J} y_{ij} =1,   \sum_{i \in I} y_{ij} = C, i \in I, j \in J  \}.
\end{equation} 

The relationships between $y_{ij}$ and other variables can be expressed by projection $Proj_{y} (P^R _{IJKB})=Proj_{y}(P^S _{IJKB})=M_{IJ}$. This provides the inspiration that we can simplify the model by removing undecided variables while ensuring that the model still preserves the problem structure. This is the driving force behind our statistics-based approach.

\section{Statistics-based approach for Model Reduction}
\label{sat}
\subsection{\emph{Analysis of Probability distribution}}
\label{sectionnormal}

In OBOPP, we consider the statistic of the solution values in a single problem with a data set. We first define the statistic problem as sampling the objective value of a set of random solutions. By random solution, we mean a $|J|$-partitioned permutation 
$$p=(\{a^k_1\}_{k \in [C]}, \{a^k_2\} _{k \in [C]},...,\{a^k _{|J|}\} _{k \in [C] }), \quad a^k _j \in I \quad \text{for all} \ j \in J \ \text{and} \ k \in [C],$$
where $ [C]=\{1,2,...,C\}$ and $\{a^k_j\}_{k \in [C]}$ for a number j is a set orders assigned to batch j. The sets of orders satisfy $\cup_{j \in J} \{a ^k _j\}_{k \in [C]} = I$, $\{a ^k _i\}_{k \in [C]} \cap \{a ^k _j\}_{k \in [C]} = \emptyset $ for a unequal pair of bathes $i,j \in J$. The $|J|$-partitioned permutation $p$ belongs to a probability space defined by $(\Omega, \mathcal{F}, P)$. The sample space $\Omega$ is the set of all feasible $|J|$-partitioned permutations $p$ and P$(p) = \frac{1}{|\Omega|}$.

\begin{obs}
	Given the set $I$, the feasible $|J|$-partitioned permutations are in one-to-one correspondence to the feasible integer points of $M_{IJ}$.
	\label{obs1}
\end{obs}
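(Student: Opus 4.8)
The plan is to prove the statement by exhibiting an explicit bijection between the (finite) set of feasible $|J|$-partitioned permutations and the set of integer points of $M_{IJ}$, and then checking it is well defined and invertible in both directions. As a preliminary I would record the counting identity underlying both objects: summing $\sum_{j \in J} y_{ij} = 1$ over $i \in I$ and summing $\sum_{i \in I} y_{ij} = C$ over $j \in J$ both evaluate $\sum_{i,j} y_{ij}$, forcing $|I| = C|J|$; this same identity is exactly what allows $I$ to be split into $|J|$ disjoint blocks of size $C$. Hence both sides are nonempty under the same hypothesis, and I may assume $|I| = C|J|$ throughout.

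The forward map $\Phi$ sends a partitioned permutation $p = (\{a^k_1\}_{k\in[C]}, \dots, \{a^k_{|J|}\}_{k\in[C]})$ to the matrix with $y_{ij} = 1$ iff $i \in \{a^k_j\}_{k\in[C]}$, and $y_{ij}=0$ otherwise. Here I would translate the two defining properties of $p$ into the two constraints of $M_{IJ}$: covering together with disjointness, $\cup_j \{a^k_j\}_k = I$ and $\{a^k_i\}_k \cap \{a^k_j\}_k = \emptyset$ for $i \neq j$, says each order sits in exactly one block, i.e. $\sum_j y_{ij} = 1$; and (using the counting identity, which forces each block to contain $C$ \emph{distinct} orders) each block has size $C$, i.e. $\sum_i y_{ij} = C$. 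The inverse map $\Psi$ sends $y \in M_{IJ}$ to the tuple whose $j$-th block is $\{i \in I : y_{ij} = 1\}$, listed as $\{a^k_j\}_{k\in[C]}$; the column constraint makes this set have exactly $C$ elements and the row constraint makes the blocks pairwise disjoint with union $I$. Verifying $\Phi\circ\Psi = \mathrm{id}$ and $\Psi\circ\Phi = \mathrm{id}$ is then immediate, since ``$i$ belongs to block $j$'' and ``$y_{ij}=1$'' carry the same information.

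The one point I would be careful about — and the only place the argument is not purely mechanical — is the reading of the word \emph{permutation}. Because each block is written with set braces, the position of an order inside its block is immaterial, and $y$ likewise records only block membership and not intra-block order; so no information is created or destroyed in either direction, and the index $k\in[C]$ used to enumerate a block in $\Psi$ is a mere labeling (one may fix it, say, by increasing index of $i$) that leaves the underlying set unchanged. Granting that reading, the correspondence is a clean bijection and the observation follows; in effect it records, at the level of integer points, the earlier projection fact $\mathrm{Proj}_y(P^R_{IJKB}) = \mathrm{Proj}_y(P^S_{IJKB}) = M_{IJ}$.
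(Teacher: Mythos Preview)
Your proof is correct. The paper itself offers no argument for this observation: it is stated without proof and immediately used in the next sentence, so there is nothing to compare against beyond the fact that your explicit bijection is the evident one the authors have in mind. Your care with the counting identity $|I|=C|J|$ and with the set-brace reading of the blocks (so that intra-block order is immaterial) is more than the paper supplies, and is exactly what is needed to make the one-to-one correspondence precise.
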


From the observation \ref{obs1} above, we notice that the random solution are equivalent to all the feasible solutions. To construct a random variable for sampling, for a $|J|$-partitioned permutation $\hat p$, we compute the objective values of the corresponding feasible integer point $\hat y(\hat p)$ by solving
\begin{equation}
\min \{\tau \sum_{j \in J} \sum_{k \in K} z_{jk}+(1-\tau)  \sum _{j \in J} \sum_{b \in B}2 d_{jb} : (\hat y (\hat p),z,d) \in P^R _{IJKB}\},
\label{return_fix}
\end{equation}
and
\begin{equation}
\min \{\tau \sum_{j \in J} \sum_{k \in K} z_{jk}+(1-\tau) \sum _{j \in J} \sum_{b \in B} l_b \delta_{jb}: (\hat y ( \hat p),z,\delta,q,u,o)\in P^S _{IJKB}\}.	
\label{Sshape_fix}
\end{equation}
corresponding to two routing strategies.

Because of the projection relationship mentioned above, the optimal solution exists. For simplification, we denote the objective function and polytope as a tuple $\pi = (f,P)$. Let random variable $X^{\pi}(p)$ be a mapping
$$  p \mapsto   \min \{f(y(p),w): (y(p),w)\in P\},$$

where $w$ indicates other variables except for variable $y$. The polytope $P$ satisfies $Proj _{y}=M_{IJ}$. To simplify the notions, we use $X^R$ and $X^S$ to denote the random variables of the initial objective functions and polytopes, where $R$ and $S$ are the tuples of objective functions and polytopes in \ref{return_fix} and \ref{Sshape_fix}. By generating random solution $p$, we can record the sampled value of $X^R$ and $X^S$ and further infer the distribution of them. Denote $X^{R} _{i}$ and $X^{S} _{i} $the $i$th sampled value. The problem becomes to infer the distribution of $X^R$ and $X^S$ in probability space $(\Omega, \mathcal{F}, P)$. 

The experiments are organized as a Monte Carlo Sampling study. We randomly generate $n$ random solutions $p$ with a real-world data instance (800 orders). The method of sampling from random variable $X^{\pi}(p)$ can be expressed as the following three steps:

1. Generate a random permutation of elements in $I$ and partition the permutation into $|J|$ feasible batches to maintain $\hat{p}$. Get the corresponding feasible integer point $\hat y(\hat{p})$. Go to step 2

2. Solve the problem $\min \{f(\hat y(\hat{p}),w): (\hat y(\hat{p}),w)\in P\}$ with $\hat{y}(\hat{p})$. Record the objective value in a list.

3. Repeat step 1 until the list has $n$ records.

The generating process in step 1 ensures the equivalent probability of each solution in the sample space $\Omega$. We use the Python packages   \texttt{Statistics} and \texttt{Pingouin} for normality tests. We set $n=5000$ to ensure significant conclusions of the experiments.

The Experiment results are shown below. We plot empirical probability density functions (PDF) (See Figure \ref{pdReturn} and \ref{pdSshape}) with n sampled points to show the probability density of $X^R$ and $X^S$. To verify the normality, we conduct a Kolmogorov-Smirnov (K-S) test and the results accept the normality hypothesis with p-values equal to 0.8453 and 0.8236.

\begin{figure}
\centering
\begin{subfigure}{.5\textwidth}
  \centering
		\includegraphics[scale=0.5]{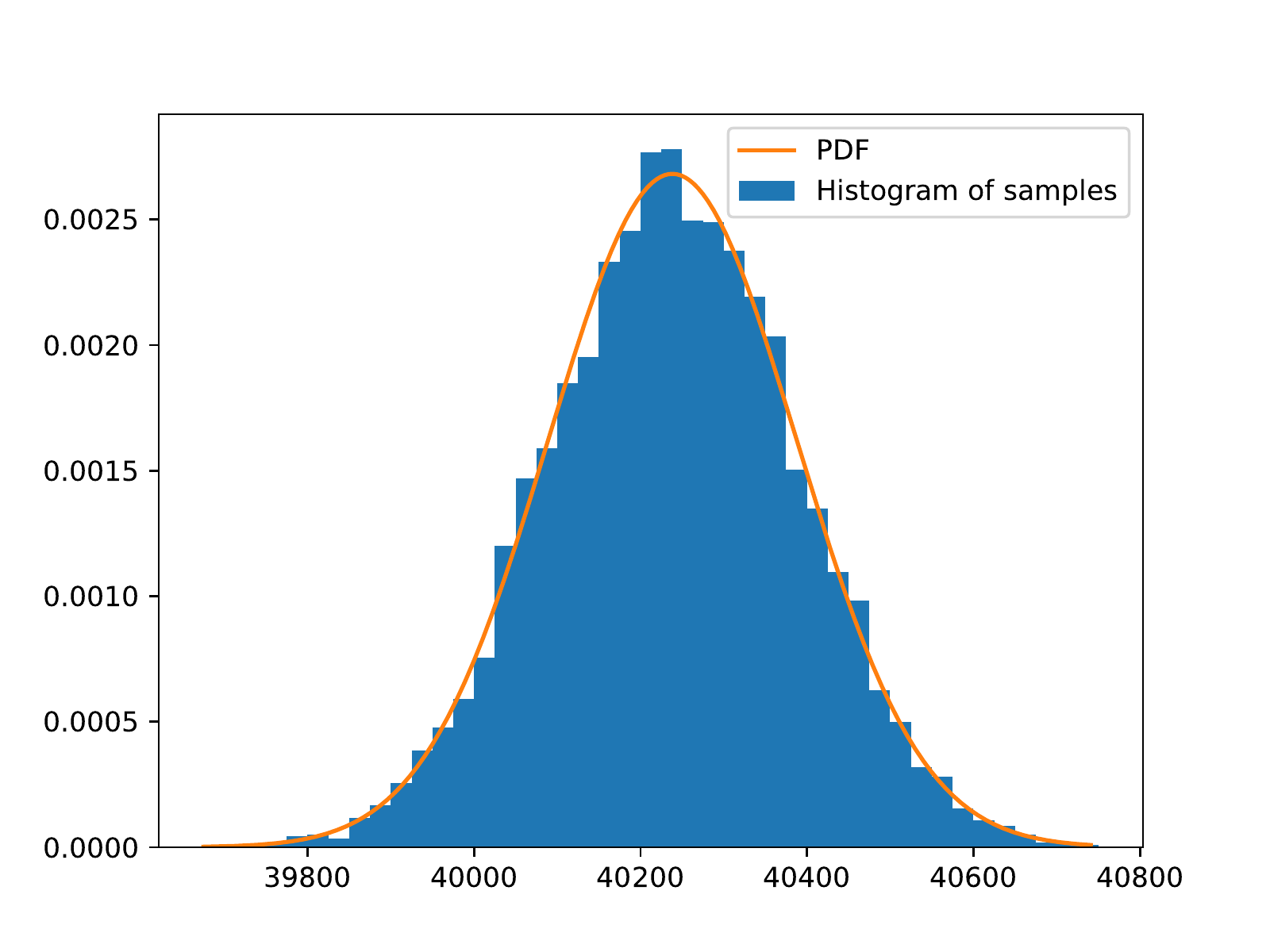}
        \caption{Return strategy}
        \label{pdReturn}
\end{subfigure}%
\begin{subfigure}{.5\textwidth}
  \centering
	    \includegraphics[scale=0.5]{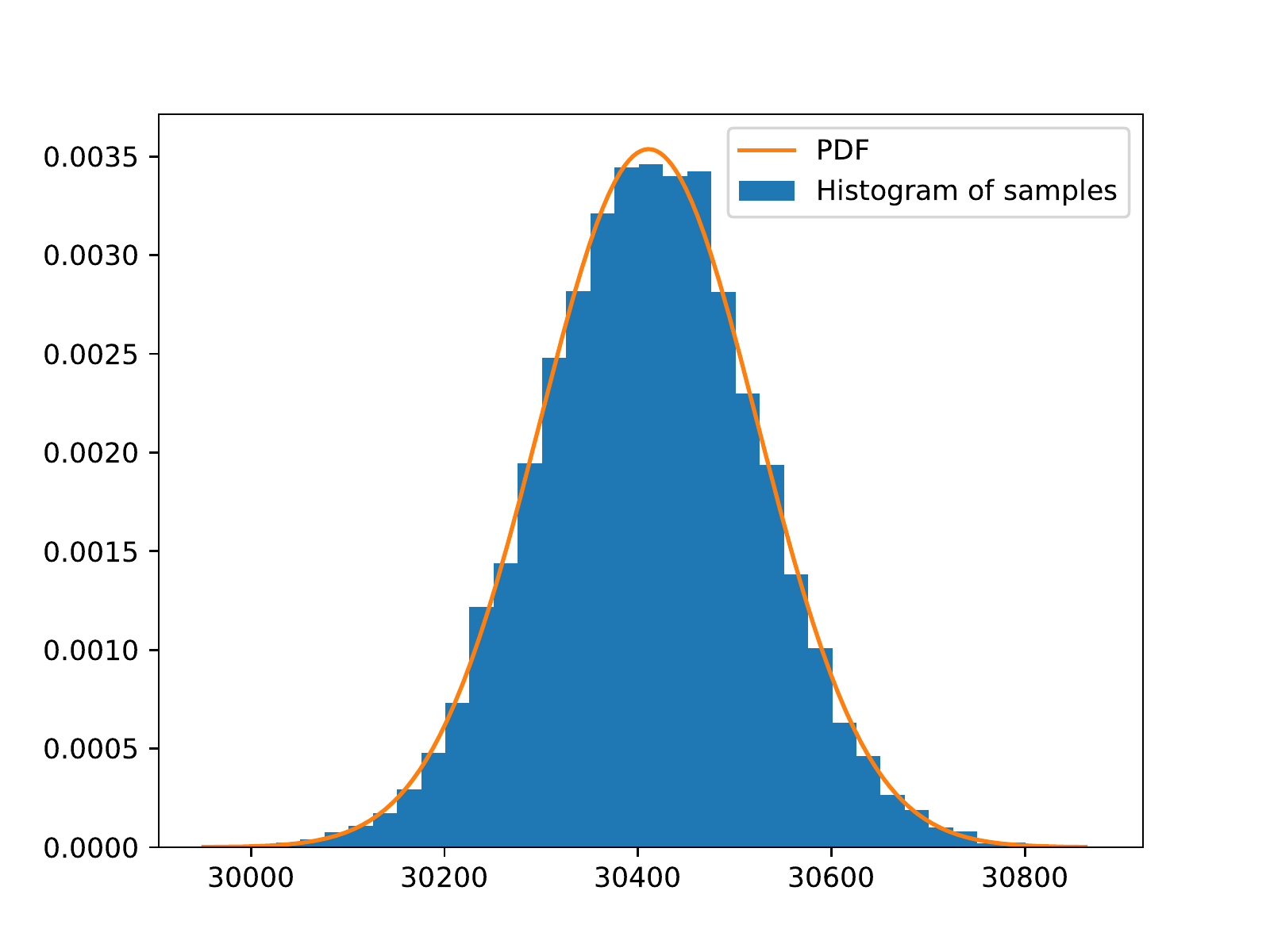}
	    \caption{S-shape strategy}
	    \label{pdSshape}
\end{subfigure}
\caption{Normality of Solution distribution under different routing strategies}
\label{fig:test}
\end{figure}
\subsection{\emph{Correlation-based approximation framework}}

While the distributions of solution values are tested to be normal, how to get a high-quality solution remains unsolved. The main difficulty of directly solving the MIP problem (\ref{eq0}) and (\ref{objs}) is that they are complicated and having are too many variables. Here we give an framework to form an approximation MILP formulation for (\ref{eq0}) and (\ref{objs}). This framework is based on using a random variable $Y^{\pi}$ to approximate $X^R$ and $X^S$ at the same time. We use the correlation coefficient to evaluate the goodness of fit for $X^R$ and $X^S$. The joint distribution functions can provide an upper bound of estimation of $X^R$ and $X^S$. Under such circumstances, if $Y^{\pi}$ represents a simpler and easy-to-solve MILP formulation, then by solving $Y^{\pi}$ we get a $|J|$-partition permutation $p$ which has a high possibility to be a high-quality solution for the formulation (\ref{eq0}) and (\ref{objs}) at the same time.

We define the problem as the Max-Correlation-Reformulation problem(MCRP). For a given random variable $X^{\pi_0}$ with $\pi_0=(f_X,P_X)$, we describe the MCRP as follows.
\begin{equation}
\max Corr(Y^{\pi},X^{\pi_0})-\epsilon ln(\beta),
\label{MCRPobj}
\end{equation}
where $Y^{\pi}$ with $\pi =(f_Y,P_Y)$ and $\beta \in (0,1)$ satisfying
\begin{equation}
\text{dim}(\text{conv}(P_Y))\leq \beta \text{dim}(\text{conv}(P_X)),
\label{MCRPdim}
\end{equation}
and there exists a bijection $g$ such that
\begin{equation}
	g(Proj_y(P_Y))=M_{IJ}.
\label{MCRPproj}
\end{equation}

Here $Corr(Y^{\pi},X^{\pi_0})$ stands for the correlation coefficient for random variables  $Y^{\pi}$ and  $X^{\pi_0}$. The objective function in (\ref{MCRPobj}) aims to seek the maximum correlation coefficient value while ensuring the scale of the formulation represented by $Y^\pi$ is not too large. Generally, a large dimension (i.e. too many variables) would cause a large branching tree, and thus an exponential increment of computing time. Therefore, we set constraint (\ref{MCRPdim}) to limit the dimension of the formulation represented by $Y^\pi$. Constraint (\ref{MCRPproj}) ensures that the formulation represented by $Y^\pi$ does not lose any feasible solution.

We test the normality of $X^R$ and $X^S$ in subsection \ref{sectionnormal}. Since we consider $ Y^{\pi}$ and $X^{\pi_0}$ as random variables, we give the conditional expectation of $X^{\pi_0}$ as below.

\begin{prop}
Random variables $Y^{\pi}$ and $X^{\pi_0}$ are defined by $  p \mapsto   \min \{f(y(p),w): (y(p),w)\in P\}$. If $(X^{\pi_0},Y^{\pi})$ follow bivariate normal distribution
\begin{equation}
\centering
(X^{\pi},Y^{\pi}) \sim N\left(\left(\mu_{X}, \mu_{Y}\right),\left[\begin{array}{cc}
	\sigma_{X}^{2} & \rho \sigma_{X} \sigma_{Y} \\
	\rho \sigma_{X} \sigma_{Y} & \sigma_{Y}^{2}
\end{array}\right]\right)
\label{jointbinormal}
.\end{equation}
Given $Y^\pi=Y_0$, the conditional expectation of $X^{\pi_0}$ is

$$\mathbb{E}[X^{\pi_0} \mid Y^{\pi}=Y_0]=\mu_{X}+\sigma_{X} \rho \frac{Y_0-\mu_{Y}}{\sigma_{Y}}. $$
\end{prop}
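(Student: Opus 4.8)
The plan is to prove this by the standard Gaussian-regression decomposition, which sidesteps any messy manipulation of densities: I would write $X^{\pi_0}$ as an affine function of $Y^{\pi}$ plus a residual that is \emph{independent} of $Y^{\pi}$, so that the conditional expectation collapses onto the deterministic affine part evaluated at $Y_0$.

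First I would define the candidate regression slope $b=\rho\sigma_X/\sigma_Y$ and intercept $a=\mu_X-b\mu_Y$, and set $\varepsilon=X^{\pi_0}-a-bY^{\pi}$. A direct computation using the covariance matrix in (\ref{jointbinormal}) gives $\mathbb{E}[\varepsilon]=\mu_X-a-b\mu_Y=0$ and $\text{Cov}(\varepsilon,Y^{\pi})=\text{Cov}(X^{\pi_0},Y^{\pi})-b\,\text{Var}(Y^{\pi})=\rho\sigma_X\sigma_Y-(\rho\sigma_X/\sigma_Y)\sigma_Y^{2}=0$, so the residual is mean-zero and uncorrelated with $Y^{\pi}$.

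Second, and this is the crux of the argument, I would invoke the fact that for jointly normal random variables zero correlation implies independence. Since $(X^{\pi_0},Y^{\pi})$ is bivariate normal, the pair $(\varepsilon,Y^{\pi})$ is an affine image of it and is therefore itself jointly normal; combined with $\text{Cov}(\varepsilon,Y^{\pi})=0$ this forces $\varepsilon$ and $Y^{\pi}$ to be independent. This is the only non-algebraic step and the main thing to get right, because uncorrelatedness alone does not give independence without the joint-normality hypothesis, which is exactly what the bivariate-normal assumption supplies.

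Finally I would condition on $Y^{\pi}=Y_0$: by linearity, $\mathbb{E}[X^{\pi_0}\mid Y^{\pi}=Y_0]=a+bY_0+\mathbb{E}[\varepsilon\mid Y^{\pi}=Y_0]$, and the last term equals $\mathbb{E}[\varepsilon]=0$ by independence, leaving $a+bY_0=\mu_X+\tfrac{\rho\sigma_X}{\sigma_Y}(Y_0-\mu_Y)$, which is the claimed formula. As an alternative route I could instead form the conditional density $f_{X\mid Y}(x\mid Y_0)=f_{X,Y}(x,Y_0)/f_Y(Y_0)$ directly from the bivariate normal density and complete the square in $x$ to recognize a univariate normal whose mean is the stated expression; that version is fully self-contained, but the completion-of-the-square algebra is the tedious obstacle there, so I would prefer the regression argument above.
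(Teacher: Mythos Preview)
Your argument is correct. The regression decomposition $X^{\pi_0}=a+bY^{\pi}+\varepsilon$ with $b=\rho\sigma_X/\sigma_Y$, followed by the observation that joint normality turns the zero covariance $\text{Cov}(\varepsilon,Y^{\pi})=0$ into genuine independence, is a clean and standard route to the conditional mean, and your algebra is right.

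The paper, however, does not actually prove this proposition: it simply remarks that the result ``can be driven directly from the definition of bivariate normal distribution random variables'' and moves on. So there is no detailed argument to compare against; you have supplied far more than the authors do. It is worth noting that the density-quotient route you sketch as an alternative---forming $f_{X\mid Y}=f_{X,Y}/f_Y$ and completing the square---is precisely the method the paper employs in the proof of the \emph{next} proposition to identify the full conditional law $X^{\pi_0}\mid Y^{\pi}=Y_0\sim N\bigl(\mu_X+\rho\tfrac{\sigma_X}{\sigma_Y}(Y_0-\mu_Y),(1-\rho^2)\sigma_X^2\bigr)$, from which the present statement follows as the mean of that normal. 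Your regression approach is arguably tidier for the expectation alone, while the density computation has the advantage of delivering the conditional variance simultaneously, which the paper needs anyway for its confidence-interval result.
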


The proof can be driven directly from the definition of bivariate normal distribution random variables.

We also provide the confidence interval of $X^{\pi_0}$ given $Y^{\pi}=Y_0$. For interval $(-\infty, z)$, $X^{\pi_0}$ has a probability of $1-\alpha$ to be less than $z$.

\begin{prop}
Random variables $Y^{\pi}$ and $X^{\pi_0}$ are defined by $  p \mapsto   \min \{f(y(p),w): (y(p),w)\in P\}$. If $(X^{\pi},Y^{\pi})$ follow bivariate normal distribution (\ref{jointbinormal}). Given $Y^\pi=Y_0$, the upper bound $z$ that $P(X^{\pi_0} < z| Y^{\pi}=Y_0)=1-\alpha$ is given by

$$ z = \sigma_{X|Y} \Phi ^{-1}(1-\alpha)+\mu_{X|Y},$$

where $\Phi(z)=\frac{1}{\sqrt{2 \pi}} \int_{-\infty}^{z} e^{-t / 2} d t$ and 

$$ \mu_{X|Y}=\mu_{X}+\sigma_{X} \rho \frac{Y_0-\mu_{Y}}{\sigma_{Y}},$$

$$ \sigma_{X|Y}=(1-\rho^{2}) \sigma_{X}^{2}.$$

\end{prop}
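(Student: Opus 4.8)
The plan is to exploit the standard fact that any conditional of a bivariate normal is again (univariate) normal, reducing the problem to pinning down the conditional variance and then inverting the standard normal CDF. The preceding proposition already supplies the conditional mean $\mu_{X|Y}$, so the remaining work splits into two steps: (i) show that $X^{\pi_0}\mid(Y^{\pi}=Y_0)$ is normal with variance $(1-\rho^2)\sigma_X^2$, and (ii) solve $P(X^{\pi_0}<z\mid Y^{\pi}=Y_0)=1-\alpha$ for $z$.

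For step (i) I would start from the joint density associated with (\ref{jointbinormal}) and divide by the marginal density of $Y^{\pi}$, which is $N(\mu_Y,\sigma_Y^2)$. Expanding the quadratic form in the exponent and completing the square in the $x$-variable, the cross term together with the $(y-\mu_Y)^2$ term cancels against the marginal, leaving an exponent that is a perfect square in $x$ centered at $\mu_{X|Y}=\mu_X+\rho\sigma_X(Y_0-\mu_Y)/\sigma_Y$ with coefficient $1/(2(1-\rho^2)\sigma_X^2)$. This identifies the conditional law as normal with mean $\mu_{X|Y}$ and variance $\sigma_{X|Y}^2=(1-\rho^2)\sigma_X^2$, in agreement with the conditional expectation already established.

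For step (ii), since $X^{\pi_0}\mid(Y^{\pi}=Y_0)\sim N(\mu_{X|Y},\sigma_{X|Y}^2)$, standardizing gives $(X^{\pi_0}-\mu_{X|Y})/\sigma_{X|Y}\sim N(0,1)$, so that
\[ P(X^{\pi_0}<z\mid Y^{\pi}=Y_0)=\Phi\!\left(\frac{z-\mu_{X|Y}}{\sigma_{X|Y}}\right). \]
Setting the right-hand side equal to $1-\alpha$ and applying $\Phi^{-1}$ yields $(z-\mu_{X|Y})/\sigma_{X|Y}=\Phi^{-1}(1-\alpha)$, i.e. $z=\sigma_{X|Y}\Phi^{-1}(1-\alpha)+\mu_{X|Y}$, which is the claimed expression.

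The only genuinely technical point is the completing-the-square computation in step (i); everything else is bookkeeping, and neither step presents a conceptual obstacle. I would, however, flag a minor notational caveat: the displayed formula $\sigma_{X|Y}=(1-\rho^2)\sigma_X^2$ actually records the conditional \emph{variance}, so for the standardization in step (ii) to be dimensionally correct the symbol $\sigma_{X|Y}$ appearing in the expression for $z$ must be read as the conditional standard deviation, namely $\sqrt{1-\rho^2}\,\sigma_X$.
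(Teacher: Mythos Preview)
Your proposal is correct and follows essentially the same route as the paper: divide the joint bivariate normal density by the marginal of $Y^{\pi}$, identify the resulting conditional as $N(\mu_{X|Y},(1-\rho^{2})\sigma_X^{2})$, then standardize and invert $\Phi$ to solve for $z$. Your closing caveat is well taken---the paper's proof exhibits the same ambiguity, writing $\sigma_{X|Y}$ for the conditional variance $(1-\rho^{2})\sigma_X^{2}$ yet using it as a standard deviation in the standardization step.
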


\begin{proof}
Since
$$
f_{X \mid Y}\left(X^{\pi_0} \right)=\frac{f_{Y,X}\left(Y^{\pi}, X^{\pi_0}\right)}{f_{Y}\left(Y^{\pi}\right)},
$$
where
$$
\begin{gathered}
	f_{Y, X}\left(Y^{\pi}, X^{\pi_0}\right)= \\
	\frac{1}{2 \pi \sigma_{Y} \sigma_{X} \sqrt{1-\rho}} \exp \left(-\frac{1}{2\left(1-\rho^{2}\right)}\left(\frac{\left(Y^{\pi}-\mu_{Y}\right)^{2}}{\sigma_{Y} ^{2}}+\frac{\left(X^{\pi_0}-\mu_{X}\right)^{2}}{\sigma_{X}^{2}}-\frac{2 \rho\left(Y^{\pi}-\mu_{Y}\right)\left(X^{\pi_0}-\mu_{X}\right)}{\sigma_{Y} \sigma_{X}}\right)\right),
\end{gathered}
$$
and
$$
f_{Y}(Y^{\pi})=\frac{1}{\sqrt{2 \pi \sigma_{Y}^{2}}} \exp \left(-\frac{\left(Y^{\pi}-\mu_{Y}\right)^{2}}{2 \sigma_{Y}^{2}}\right).
$$
The conditional distribution follows
$$
X^{\pi_0} \mid Y^{\pi}=Y_0 \sim N\left(\mu_{X}+\rho \frac{\sigma_{Y}}{\sigma_{X}}\left(Y_0-\mu_{Y}\right),\left(1-\rho^{2}\right) \sigma_{X}^{2}\right)
.$$

The equation

$$P(X^{\pi_0} < z| Y^{\pi}=Y_0)=1-\alpha$$

lead to

$$ \Phi(\frac{z-\mu_{X|Y}}{\sigma_{X|Y}})=1-\alpha.$$

Then we have the result

$$ z = \sigma_{X|Y} \Phi ^{-1}(1-\alpha)+\mu_{X|Y}.$$

\end{proof}

If $(X^{\pi},Y^{\pi})$ does not follow bivariate normal distribution or any known distribution, we can still estimate the upper bound of confidence interval of $X^{\pi_0}$ on condition that $Y^{\pi}=Y_0$ by the empirical distribution function.

\begin{prop}
Random variables $Y^{\pi}$ and $X^{\pi_0}$ are defined by $  p \mapsto   \min \{f(y(p),w): (y(p),w)\in P\}$. Let $\left((X_{1},Y_{1})\ldots, (X_{n},Y_{n})\right)$ be independent, identically distributed random variables with the common cumulative distribution function of $F(u,v)$. Then the empirical distribution function is defined as
$\widehat{F}_{n}(u,v)=\frac{1}{n} \sum_{i=1}^{n} \mathbf{1}_{(X_{i} \leq u, Y_{i}\leq v)}$

$\widehat{F}_{n}(v)=\frac{1}{n} \sum_{i=1}^{n} \mathbf{1}_{Y_{i}\leq v}$
where $\mathbf{1}_{A}$ is the indicator of event $A$. Given $Y^{\pi} \leq Y_0$ and a real number $z$,

$$ P(X^{\pi_0} \leq z | Y^{\pi} \leq Y_0)= \frac{\widehat{F}_{n}(z,Y_0)}{\widehat{F}_{n}(Y_0)} =\frac{\sum_{i=1}^{n} \mathbf{1}_{(X_{i} \leq z, Y_{i}\leq Y_0)}}{\sum_{i=1}^{n} \mathbf{1}_{Y_{i}\leq Y_0}}.$$
\end{prop}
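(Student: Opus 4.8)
The plan is to read the claimed identity as the empirical (plug-in) version of the elementary definition of conditional probability, and to justify it in two stages: an exact population-level identity, followed by the replacement of the unknown distribution by its sample counterpart.

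First I would note that, in contrast to the preceding two propositions, here we condition on the event $\{Y^{\pi} \leq Y_0\}$ rather than on the measure-zero event $\{Y^{\pi} = Y_0\}$. Provided $P(Y^{\pi} \leq Y_0) > 0$, the conditional probability is then unambiguous and given directly by the ratio
\[
P(X^{\pi_0} \leq z \mid Y^{\pi} \leq Y_0) = \frac{P(X^{\pi_0} \leq z, \, Y^{\pi} \leq Y_0)}{P(Y^{\pi} \leq Y_0)},
\]
with no recourse to conditional densities or Radon--Nikodym derivatives. The numerator is exactly the joint distribution function evaluated at $(z, Y_0)$, namely $F(z,Y_0)$, and the denominator is the marginal $F_Y(Y_0)$; so the population identity reads $P(X^{\pi_0} \leq z \mid Y^{\pi} \leq Y_0) = F(z,Y_0)/F_Y(Y_0)$.

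Next I would pass from the unknown $F$ to its empirical counterpart built from the i.i.d. sample $(X_1,Y_1),\ldots,(X_n,Y_n)$. By their definitions, $\widehat{F}_n(z,Y_0)$ and $\widehat{F}_n(Y_0)$ are the sample proportions of observations in the events $\{X_i \leq z,\, Y_i \leq Y_0\}$ and $\{Y_i \leq Y_0\}$, each an average of indicator variables whose expectation is the corresponding true probability. Substituting these estimators into the population identity and cancelling the common factor $1/n$ in numerator and denominator produces the stated ratio of counts.

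The main thing to be careful about is the interpretation of the equality sign: the left-hand side is a population quantity while the right-hand side is a sample statistic, so the identity should be understood as the definition of the plug-in estimator rather than an exact equality. The only step with genuine content is the legitimacy of this substitution, i.e. that $\widehat{F}_n \to F$ as $n \to \infty$; here I would invoke the strong law of large numbers pointwise (or the Glivenko--Cantelli theorem for uniform almost-sure convergence), which is the sole ingredient beyond bookkeeping. I would also record the harmless proviso $\widehat{F}_n(Y_0) > 0$, equivalent to at least one sampled $Y_i \leq Y_0$, which fails only on an event of probability tending to zero whenever $F_Y(Y_0) > 0$.
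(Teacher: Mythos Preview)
Your proposal is correct and follows essentially the same approach as the paper, which simply states that the result ``can be driven directly from the definition of the cumulative distribution function'' and gives no further argument. Your treatment is in fact more careful than the paper's: you explicitly distinguish the population identity $P(X^{\pi_0}\le z\mid Y^{\pi}\le Y_0)=F(z,Y_0)/F_Y(Y_0)$ from its empirical plug-in version, and you flag the consistency justification (law of large numbers or Glivenko--Cantelli) and the proviso $\widehat{F}_n(Y_0)>0$, none of which the paper mentions.
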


The proof can be driven directly from the definition of the cumulative distribution function.

\section{Construct Solution for the MCRP}
\label{sol}
We present a two-phase method for solving the MCRP heuristically in this section. Given $X^{\pi_0}$ equals to $X^R$ or $X^S$, the plan is to acquire a good correlation coefficient value and meanwhile a small dimension ratio $\beta$ for $Y^{\pi}$. In the first phase, we find a simplified model (i.e. polytope a with lower dimension) from the past classic heuristics. In the second phase, with the fixed polytope, we try to find a better objective function with a high correlation coefficient with the original model.

\subsection{\emph{Decrease the dimension of formulation}}
\label{sol1}
We find our initial approximation model using examples of past heuristics because one reason for their success is that they capture vital features of the problem. By setting rules based on these features they find high-quality solutions. Therefore, by experiment, we find that an approximation formulation that minimizes the number of aisles traversed in each batch gives a high correlation coefficient with the exact one. It is a variant of the Seed heuristic \citep{de1999efficient}, which shows a link between the reduction of reduction in the number of aisles visited in each batch and the decrease in both total travel distance and the number of unique items.

 Since the approximation model only considers whether an aisle is visited in a batch, the other information provided by each order is redundant. Consider the orders that include items from the same aisles as a pattern $t$. The total number of patterns must be less than the total number of orders. Therefore, we substitute $y_{ij}$ with the variable $x_{tj}$ to decide how many orders of pattern $t$ are assigned to batch $j$. This replacement does not exclude feasible solutions, but it does lower the model's scale. 

Denote $Q_t$ the set of custom orders of pattern t. Let	$T$ be the set of all patterns and $t$ its index $t \in T$. The set of patterns that visit aisle $b$ is denoted by $T_b$. The variable $x_{tj}$ determines how many orders of pattern $t$ are assigned to batch $j$. As a result, the final approximation model is stated as follows. 

\begin{equation}
	\min \{ \sum_{j \in J} \sum_{b \in B}  \delta_{jb}:(x_{tj},\delta_{jb}) \in P^A _{IJB}\}
\end{equation}

where $P^A _{IJB}$ is given by,
\begin{align}
	\sum_{j \in J} x_{tj}& = |Q_t| , \quad \forall t \in T;
	\label{peq0}\\
	\sum_{t \in T} x_{tj} &= C, \quad \forall j \in J;
	\label{peq1}\\
	M_b \delta_{jb} &\geq \sum_{t \in T_b} x_{tj}, \quad j \in J, \quad b \in B;\label{peq2}\\
	x_{tj} &\in \{0,1,...,|Q_t|\},\quad \forall t \in T,\quad \forall j \in J;\label{peq3}\\
	\delta_{jb} &\in \{0,1\},\quad \forall j \in J, \quad \forall b \in B;\label{peq4}
\end{align}
Constraint (\ref{peq0}) is equivalent to equation (\ref{eq1}), which ensures each order is assigned to a batch. Constraint (\ref{peq1}) prevents that the number of orders in a batch exceeds the packing point's capacity. Constraint (\ref{peq2}) determines whether aisle $b$ is visited in batch $j$ with a big-M inequality. Constraints (\ref{peq3}) and (\ref{peq4}) give domains of variables $x_{tj}$ and $\delta_{jb}$

Denote $\pi=(\sum _{j \in J} \sum_{b \in B} \delta_{jb}, \ P^A _{IJB})$. To investigate the correlation coefficient between $Y^\pi$ and $X^R$(or $X^S)$, we conduct the same Monte Carlo Sampling process as described above with n=5000. The statistic are recorded as $Y^{\pi}_n$, $X^R _n$ and $X^S _n$.

The experiment results are shown below. We plot scatter figures for $Y^{\pi}_n$ and $X^R _n$ (See Figure \ref{cpdReturn}), $Y^{\pi}_n$ and $X^S _n$ (See Figure \ref{cpdSshape}). The correlation coefficient values are 0.61 and 0.85 corresponding to two strategies. It is also tested that ($Y^{\pi}$, $X^R$) and ($Y^{\pi}$, $X^S$) follow bivariate normal distributions. We plot confidence ellipses for them with $\sigma=1$ and standard deviations equal to $1 \sigma$, $2\sigma$ and $3\sigma$.  

\begin{figure}
\centering
\begin{subfigure}{.5\textwidth}
  \centering
  \includegraphics[scale=0.25]{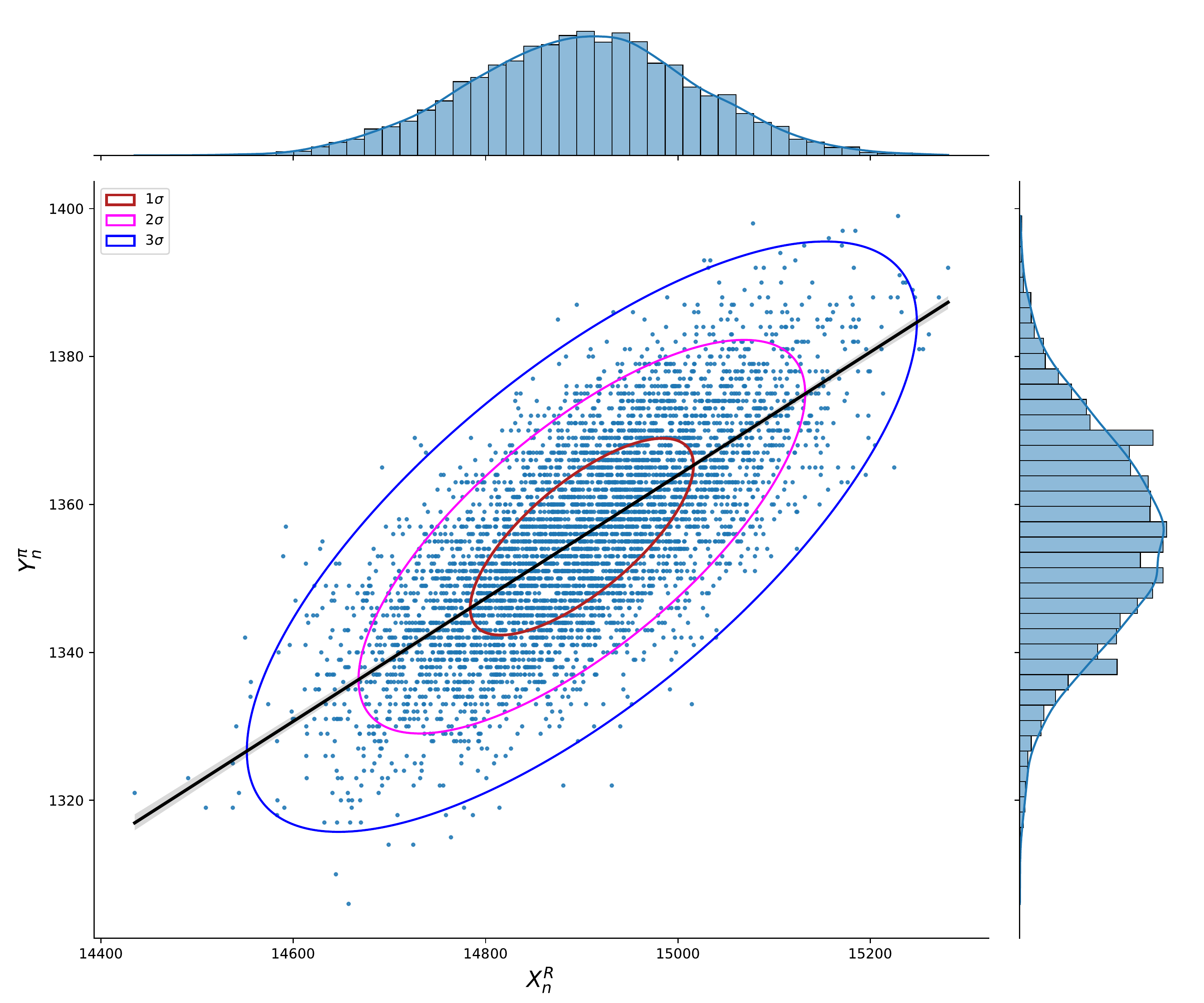}
  \caption{Return strategy with Corr=0.71}
  \label{cpdReturn}
\end{subfigure}%
\begin{subfigure}{.5\textwidth}
  \centering
	\includegraphics[scale=0.25]{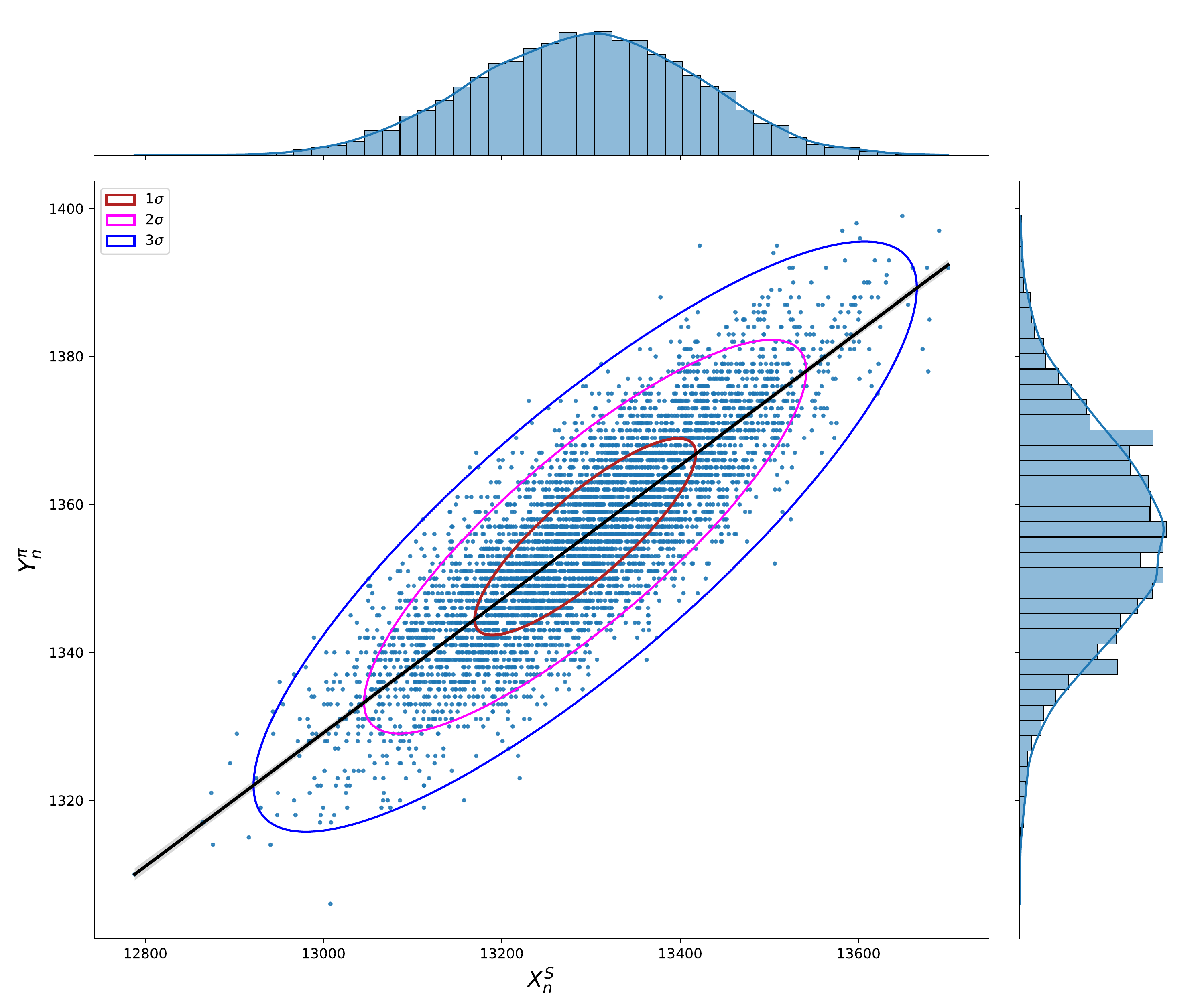}
	\caption{S-shape strategy with Corr=0.84 }
	\label{cpdSshape}
\end{subfigure}
\caption{Solution distribution with $\pi=(\sum _{j \in J} \sum_{b \in B} \delta_{jb}, \ P^A _{IJB})$}
\label{fig:test1}
\end{figure}

\subsection{{Increase the correlation coefficient}}

\label{sol2}

In the second phase, we increase the correlation coefficient by changing objective function with a fixed formulation.

To strengthen the approximation of the model, we consider the length of the aisles. As shown in the parameter table, there are $|B|$ aisles in the warehouse. The lengths of these aisles decrease with their serial numbers. We set the lengths of aisles as weights for the variable $\delta_{jb}$. Hence, we denote

$$\pi_{new}=\left(\sum _{j \in J} \sum_{b \in B} l_b \delta_{jb}, \ P^A _{IJB}\right).$$

To show the improvement, we conduct the Monte Carlo experiment with n=5000 and compute the correlation coefficient values. The scatter figures for $Y^{\pi}_n$ and $X^R _n$ (See Figure \ref{cnpdReturn}) and $X^S _n$ (See Figure \ref{cnpdSshape}) with new objective function are plotted below. The correlation coefficient values increase to 0.72 and 0.97 corresponding to the two strategies. The random variables ($Y^{\pi}$, $X^R$) and ($Y^{\pi}$, $X^S$) still follow bivariate normal distributions.

\begin{figure}
\centering
\begin{subfigure}{.5\textwidth}
  \centering
  \includegraphics[scale=0.25]{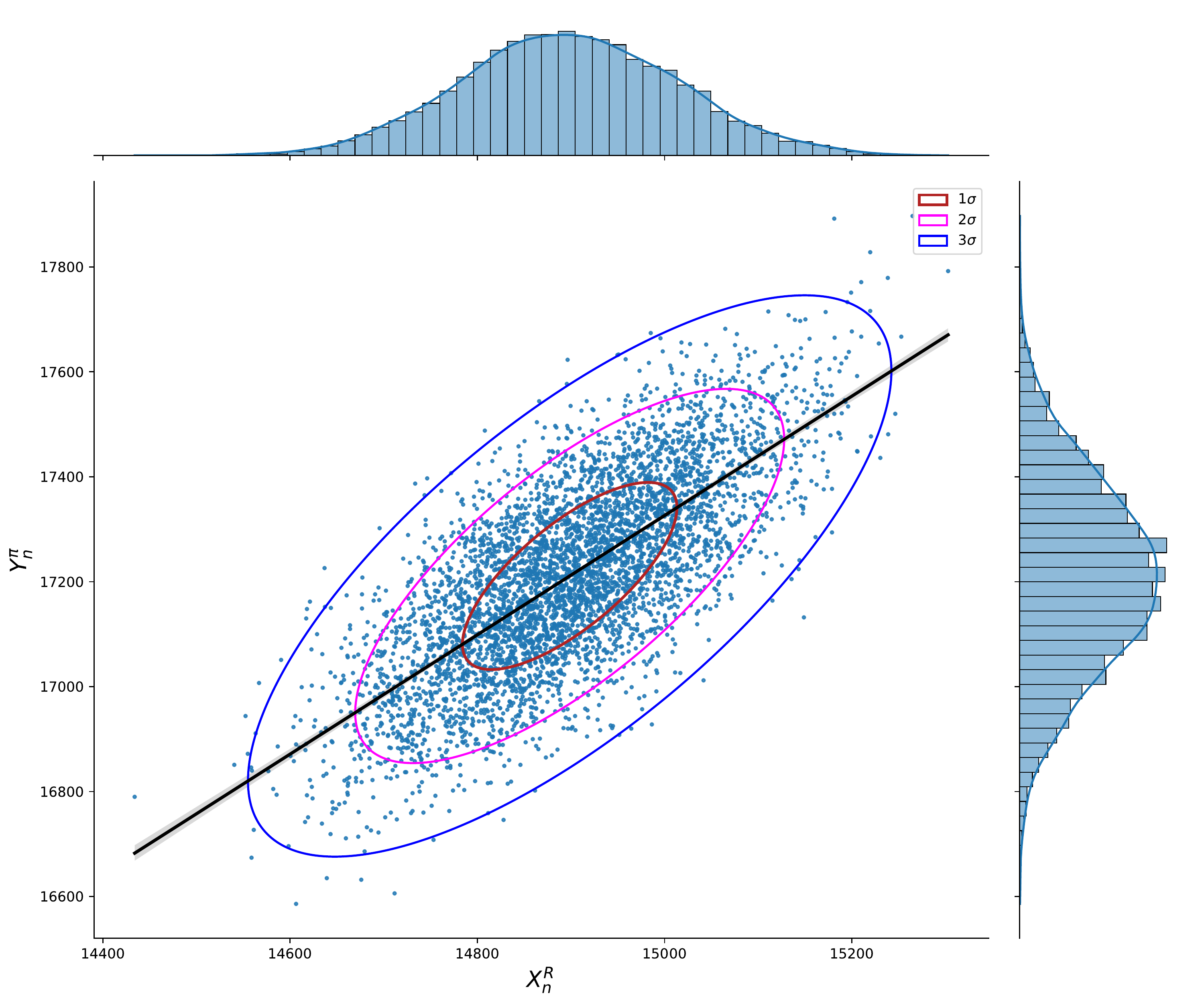}
  \caption{Return strategy with Corr=0.72}
  \label{cnpdReturn}
\end{subfigure}%
\begin{subfigure}{.5\textwidth}
  \centering
  \includegraphics[scale=0.25]{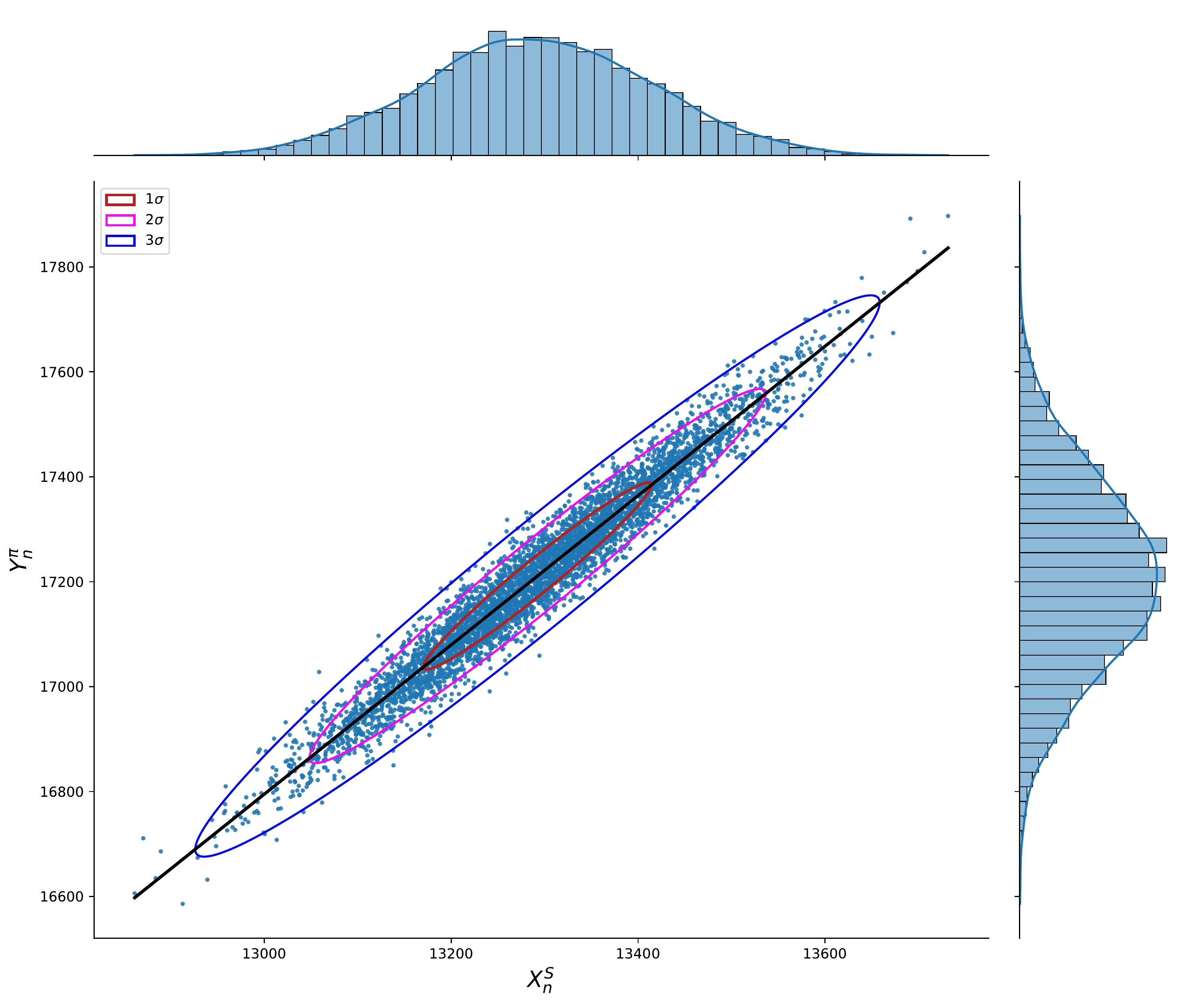}
  \caption{S-shape strategy with Corr=0.97}
  \label{cnpdSshape}
\end{subfigure}
\caption{Solution distribution with $\pi_{new}=(\sum _{j \in J} \sum_{b \in B} l_b \delta_{jb}, \ P^A _{IJB})$}
\label{fig:test2}
\end{figure}

\section{Integrated Heuristic}
\label{int}

The approximation formulation in subsection \ref{sol2} can be solved by a MILP solver. To further improve our solution for OBOPP, we consider constructing a local search-like heuristic as an integrated method.

A vital aspect of designing a local search-type algorithm for OBOPP is to find a well-defined neighborhood. Because OBOPP is high-dimensional, the search space is vast. Therefore, if we naively define the neighborhood (e.g. arbitrarily swap two orders from separate batches), we might cause an inefficient searching procedure with a significant increase in computation time.

The previous section has shown that the number of visited aisles, as an objective function, can still maintain part of the solution structure. It indicates that we can select orders with similar aisle distributions to avoid increasing the number of visited aisles but yet lowering the real objective value by swapping them. Therefore, we consider $k$-different pairs. Denote $A_1$ and $A_2$ the set of aisles visited in orders $i_1$ and $i_2$. A pair of order $(i_1,i_2)$ is called $k$-different pairs if $|A_1\backslash A_2|\leq k$ and $|A_2\backslash  A_1| \leq k$. Finally, we also set the minimum order size $m$ to ensure that we only generate the order pairs that have the potential to cause a significant decrease in objective value.

We describe the algorithm as the valuable pairs generation (VPG) method:
\begin{algorithm}[H]
	\caption{Valuable Pairs Generation}
	\begin{algorithmic}[1]  
		\REQUIRE order sequence $i_1$, $i_2$,...,$i_n$, empty pair list $L$
		\FOR {$p = 1: n-1$}
		\FOR {$q = p+1:n$}
		\IF{$|A_p \backslash A_q|\leq k$, $|A_q\backslash A_p| \leq k$ and orders $i_p$, $i_q$ have more than $m$ items}
		\STATE {Add $(i_p,i_q)$ to pair list $L$}
		\ENDIF
		\ENDFOR
		\ENDFOR
	\end{algorithmic} 
\label{ag}
\end{algorithm}
 We apply the pair list to the solution maintained by \ref{sol2}. In this process, we scan the list and swap all pairs in separate batches which can decrease the objective value. We set $k=2$ and $m=1$ to achieve a satisfactory trade-off between the full strength and efficiency of this algorithm in practice.

\section{Experiment results}
\label{exp}

\subsection{\emph{Experiment design}}
To reveal the strength of our methods (i.e., AP1: the approximation MILP proposed in subsection \ref{sol1} AP2: the approximation MILP proposed in subsection \ref{sol2}, and AP2+VPG: the VPG algorithm with AP2 providing an initial solution), we also choose the following Exact approach and classic and frequently-used heuristics in OBPs:

\begin{tabular}{rl}
	
Exact&: solve formulation (\ref{eq0}) and (\ref{objs}) with an MIP solver. \\
ILS&: iterated local search in \cite{henn2010metaheuristics}. \\
Seed&: the seed algorithm in \cite{de1999efficient}. \\
CWI&: the Clarke and Wright algorithm (I) in \cite{de1999efficient}\\
CWII&: the Clarke and Wright algorithm (II) in \cite{de1999efficient}
\end{tabular}

The datasets were acquired from a warehouse in China. The datasets contain information about the warehouse's layout. The warehouse contains 541 storage locations for 6413 unique items. A storage location's length is one length unit (LU). Aisles range in length from 1 to 36 LU and are placed in decreasing order from left to right. All the aisles align along the bottom line of the warehouse. The width of the warehouse is 48 LU. The depot is located on the left side of the aisles, while the packing point is located on the right side of the aisles. The factor $\alpha$ is set to be 0.4.

There are 800 orders in both datasets 1 and 2. As mentioned in the previous sections, we use dataset 1 as a statistical object. In this experiment, we use dataset 2 as real-world test data (Test(800)) to examine our method. Each order has an average of 2.8 items in it. The number of items to unique items ratio is approximately 3:2. All other test data are randomly generated with the same parameters. First, select a set of unique items arbitrarily from all item types. Assuming that all of the items in the order are empty slots, divide them by n numbers from the quantity of the slot, where n is the number of orders. Finally, randomly place the unique items in the slots.
\begin{table}
\caption{Experiment profiles and parameters}
\begin{tabular}{ll}
	\hline Profiles & Notations and Parameters \\
	\hline Algorithms & Exact, AP1, AP2, AP2+VPG, ILS, Seed, CWI, CW II \\
 	Routing policy & Return strategy, S-shape strategy\\
	The number of orders(N) & $480, 600, 720, \text{Test}(800), 840,960,1080,1200$ \\
	Packing capacity (C)& $10,15, \text{Test}(16), 20$ \\
	\hline
\end{tabular}
\end{table}

We use the Python 3.8 language and the academic version of GUROBI 9.5.1 for MIP solving. The programs run on a PC (Intel i7 2.60 gigahertz CPU, 16-gigabyte memory) with Windows 10.0. The time limit for solving exact models is set to 1000 seconds. The time limit to solve our approximate model is set to 100 seconds. Each experiment is formed of 20 random instances. 

To evaluate experiment results, we denote Obj, LB, and Gap as the objective value, the linear relaxation value, and the integrality gap of the exact models. Let CPU be the running time of algorithms. Finally, we use the Obj ratio to compare the solution quality of two algorithms, which is the ratio of their objective values.

\subsection{\emph{Experiment results}}

\begin{sidewaystable}
  \centering
  \caption{Comparison with Exact models}
 \begin{adjustbox}{scale=0.7,center}
    \begin{tabular}{crrrrrrrrrrrrrrr}
    \toprule
    \multirow{3}[0]{*}{N} & \multicolumn{1}{c}{\multirow{3}[0]{*}{C}} & \multicolumn{7}{c}{Return strategy}                   & \multicolumn{7}{c}{S-shape strategy} \\
    \cmidrule(lr){3-9} 
    \cmidrule(lr){10-16}
          &       & \multicolumn{3}{c}{Exact} & \multicolumn{1}{c}{AP1} & \multicolumn{1}{c}{AP2} & \multicolumn{2}{c}{Obj ratio} & \multicolumn{3}{c}{Exact} & \multicolumn{1}{c}{AP1} & \multicolumn{1}{c}{AP2} & \multicolumn{2}{c}{Obj ratio} \\
          \cmidrule(lr){3-5}
          \cmidrule(lr){8-9}
          \cmidrule(lr){10-12}
          \cmidrule(lr){15-16}
          &       & \multicolumn{1}{l}{Obj} & \multicolumn{1}{l}{Gap (\%)} & \multicolumn{1}{l}{LB} & \multicolumn{1}{l}{Obj} & \multicolumn{1}{l}{Obj} & \multicolumn{1}{l}{Exact/AP2} & \multicolumn{1}{l}{AP1/AP2} & \multicolumn{1}{l}{Obj} & \multicolumn{1}{l}{Gap (\%)} & \multicolumn{1}{l}{LB} & \multicolumn{1}{l}{Obj} & \multicolumn{1}{l}{Obj} & \multicolumn{1}{l}{Exact/AP2} & \multicolumn{1}{l}{AP1/AP2} \\
          \hline
    \multirow{3}[0]{*}{480} & 10    & 8627.28  & 73.56  & 2280.88  & 6698.80  & 6718.88  & 1.28  & 1.00  & 13165.76  & 60.04  & 5261.44  & 5593.34  & 5403.74  & 2.44  & 1.04  \\
          & 15    & 7509.28  & 73.93  & 1957.80  & 6133.50  & 6140.00  & 1.22  & 1.00  & 10188.68  & 55.86  & 4496.80  & 5040.24  & 4861.90  & 2.10  & 1.04  \\
          & 20    & 7543.44  & 73.84  & 1973.01  & 5707.43  & 5720.44  & 1.32  & 1.00  & 10447.95  & 55.69  & 4629.49  & 4625.81  & 4460.47  & 2.34  & 1.04  \\
    \cmidrule(lr){1-2}
    \multirow{3}[0]{*}{600} & 10    & 8450.32  & 75.77  & 2047.64  & 6352.47  & 6350.22  & 1.33  & 1.00  & 10677.38  & 58.60  & 4420.32  & 5195.88  & 5023.38  & 2.13  & 1.03  \\
          & 15    & 9088.35  & 75.54  & 2222.96  & 6464.94  & 6474.41  & 1.40  & 1.00  & 12349.76  & 60.93  & 4824.62  & 5265.70  & 5102.97  & 2.42  & 1.03  \\
          & 20    & 8413.19  & 76.11  & 2009.59  & 6386.47  & 6403.47  & 1.31  & 1.00  & 10260.32  & 59.13  & 4193.13  & 5165.47  & 5007.80  & 2.05  & 1.03  \\
    \cmidrule(lr){1-2}
    \multirow{3}[0]{*}{720} & 10    & 10036.55  & 76.16  & 2392.56  & 6882.47  & 6899.14  & 1.45  & 1.00  & 14414.79  & 65.54  & 4967.85  & 5596.80  & 5428.75  & 2.66  & 1.03  \\
          & 15    & 9332.99  & 76.39  & 2203.58  & 7076.95  & 7100.19  & 1.31  & 1.00  & 12057.93  & 63.07  & 4453.23  & 5729.05  & 5558.25  & 2.17  & 1.03  \\
          & 20    & 10056.57  & 76.12  & 2401.33  & 7084.74  & 7113.50  & 1.41  & 1.00  & 14298.96  & 65.39  & 4949.22  & 5710.89  & 5532.87  & 2.58  & 1.03  \\
    \cmidrule(lr){1-2}
    Test(800) & 16    & 12645.60  & 79.96  & 2533.60  & 8993.59  & 9028.40  & 1.40  & 1.00  & 14910.00  & 64.23  & 5333.20  & 6982.20  & 6628.00  & 2.25  & 1.05  \\
    \cmidrule(lr){1-2}
    \multirow{3}[0]{*}{840} & 10    & 10105.54  & 76.77  & 2347.96  & 7519.19  & 7552.84  & 1.34  & 1.00  & 13542.12  & 66.53  & 4532.23  & 6109.54  & 5915.38  & 2.29  & 1.03  \\
          & 15    & 11136.15  & 76.48  & 2619.35  & 7709.84  & 7745.83  & 1.44  & 1.00  & 16934.76  & 71.99  & 4742.84  & 6257.17  & 6061.88  & 2.79  & 1.03  \\
          & 20    & 10424.43  & 77.26  & 2370.35  & 7769.60  & 7803.75  & 1.34  & 1.00  & 13571.38  & 67.88  & 4358.45  & 6278.65  & 6077.48  & 2.23  & 1.03  \\
    \cmidrule(lr){1-2}
    \multirow{3}[0]{*}{960} & 10    & 12037.20  & 76.79  & 2794.38  & 8178.11  & 8201.45  & 1.47  & 1.00  & 19122.78  & 74.42  & 4891.09  & 6655.76  & 6423.42  & 2.98  & 1.04  \\
          & 15    & 11360.53  & 77.55  & 2550.66  & 8379.37  & 8396.32  & 1.35  & 1.00  & 15448.19  & 70.95  & 4487.21  & 6814.77  & 6574.03  & 2.35  & 1.04  \\
          & 20    & 12319.39  & 77.02  & 2830.50  & 8457.79  & 8478.57  & 1.45  & 1.00  & 19046.82  & 75.87  & 4596.53  & 6854.38  & 6617.63  & 2.88  & 1.04  \\
    \cmidrule(lr){1-2}
    \multirow{3}[0]{*}{1080} & 10    & 12360.65  & 78.26  & 2687.01  & 8858.92  & 8864.62  & 1.39  & 1.00  & 17412.96  & 73.43  & 4627.48  & 7215.58  & 6952.85  & 2.50  & 1.04  \\
          & 15    & 13715.08  & 77.86  & 3036.04  & 9059.52  & 9067.44  & 1.51  & 1.00  & 21523.68  & 77.21  & 4906.30  & 7372.41  & 7111.42  & 3.03  & 1.04  \\
          & 20    & 12946.40  & 78.91  & 2730.40  & 9153.62  & 9158.20  & 1.41  & 1.00  & 17756.36  & 72.25  & 4927.42  & 7427.41  & 7167.10  & 2.48  & 1.04  \\
    \cmidrule(lr){1-2}
    \multirow{3}[0]{*}{1200} & 10    & 14465.64  & 77.94  & 3190.91  & 9530.86  & 9519.02  & 1.52  & 1.00  & 23560.75  & 80.22  & 4661.13  & 7765.64  & 7483.53  & 3.15  & 1.04  \\
          & 15    & 13501.53  & 78.68  & 2878.80  & 9733.12  & 9727.31  & 1.39  & 1.00  & 19415.99  & 75.37  & 4781.56  & 7931.61  & 7645.97  & 2.54  & 1.04  \\
          & 20    & 15284.89  & 80.22  & 3022.92  & 9839.38  & 9832.71  & 1.55  & 1.00  & 23524.38  & 80.07  & 4687.50  & 8000.66  & 7710.64  & 3.05  & 1.04  \\
    \bottomrule
    \end{tabular}%
    \end{adjustbox}
  \label{ComERS}%
\end{sidewaystable}

\subsubsection{\emph{Comparison with Exact models}}
We first compare our simplified model with the exact models to examine its improvements. Table \ref{ComERS} shows the results. It indicates that under the return strategy AP2 roughly gives a 20-55\% reduction of the objective value compared with exact models, while the solver used 1000 seconds for the exact models and only 100 seconds for our approximation models. The difference between the two approximation models is not significant. This corresponds to the very little improvement of the correlation coefficient between them. Under the S-shape strategy, AP2 offers a roughly 110-215\% reduction of objective value. This is mainly because the original model for S-shape has a larger scale and is more difficult to solve. The difference between AP1 and AP2 is also very significant, corresponding to the improvement of the correlation coefficient, which increases from 0.84 to 0.97.

\subsubsection{\emph{Comparison with four commonly used algorithms}}
\begin{sidewaystable}
\caption{Return strategy}
\label{Com_R}
\centering
\begin{adjustbox}{scale=0.75,center}
        \begin{tabular}{crrrrrrrrrrrrrrr}
        \toprule
          &       & \multicolumn{2}{c}{AP2+VPG} & \multicolumn{3}{c}{ILS} & \multicolumn{3}{c}{Seed} & \multicolumn{3}{c}{CWI} & \multicolumn{3}{c}{CWII} \\
          \cmidrule(lr){3-4}
          \cmidrule(lr){5-7}
          \cmidrule(lr){8-10}
          \cmidrule(lr){11-13}
          \cmidrule(lr){14-16}
    \multicolumn{1}{l}{N} & \multicolumn{1}{l}{C} & \multicolumn{1}{l}{Obj} & \multicolumn{1}{l}{CPU} & \multicolumn{1}{l}{Obj} & \multicolumn{1}{l}{CPU} & \multicolumn{1}{l}{Obj ratio} & \multicolumn{1}{l}{Obj} & \multicolumn{1}{l}{CPU} & \multicolumn{1}{l}{Obj ratio} & \multicolumn{1}{l}{Obj} & \multicolumn{1}{l}{CPU} & \multicolumn{1}{l}{Obj ratio} & \multicolumn{1}{l}{Obj} & \multicolumn{1}{l}{CPU} & \multicolumn{1}{l}{Obj ratio} \\
    \hline
    \multirow{3}[0]{*}{480 } & 10    & \textbf{6437.52 } & 109.66  & 6755.60  & 9.62  & 1.05  & 9200.44  & 12.74  & 1.43  & 7677.40  & 14.43  & 1.19  & 6900.24  & 35.20  & \textbf{1.07}  \\
    
          & 15    & \textbf{5952.00 } & 109.71  & 6177.32  & 10.30  & 1.04  & 8445.00  & 13.52  & 1.42  & 7110.16  & 14.32  & 1.19  & 6358.94  & 37.05  & \textbf{1.07}  \\
          & 20    & \textbf{5563.40 } & 110.07  & 5767.15  & 10.89  & 1.04  & 7894.21  & 14.13  & 1.42  & 6682.03  & 14.30  & 1.20  & 5957.60  & 38.51  & \textbf{1.07}  \\
        \cmidrule(lr){1-2}
    \multirow{3}[0]{*}{600 } & 10    & \textbf{6176.00 } & 112.08  & 6432.12  & 10.90  & 1.04  & 8742.58  & 15.65  & 1.42  & 7395.72  & 17.15  & 1.20  & 6588.51  & 45.45  & \textbf{1.07}  \\
          & 15    & \textbf{6287.97 } & 113.31  & 6547.43  & 11.49  & 1.04  & 8928.73  & 16.99  & 1.42  & 7569.80  & 18.62  & 1.20  & 6707.65  & 50.84  & \textbf{1.07}  \\
          & 20    & \textbf{6209.72 } & 114.31  & 6478.05  & 12.12  & 1.04  & 8833.69  & 18.19  & 1.42  & 7518.51  & 19.52  & 1.21  & 6649.53  & 55.31  & \textbf{1.07}  \\
        \cmidrule(lr){1-2}
    \multirow{3}[0]{*}{720 } & 10    & \textbf{6679.29 } & 116.33  & 7012.89  & 12.01  & 1.05  & 9501.10  & 19.77  & 1.42  & 8039.66  & 22.20  & 1.20  & 7098.39  & 62.87  & \textbf{1.06}  \\
          & 15    & \textbf{6843.01 } & 117.92  & 7206.56  & 12.24  & 1.05  & 9743.55  & 21.36  & 1.42  & 8268.93  & 24.29  & 1.21  & 7281.77  & 70.43  & \textbf{1.06}  \\
          & 20    & \textbf{6846.67 } & 119.35  & 7233.20  & 12.70  & 1.06  & 9767.19  & 22.88  & 1.43  & 8329.98  & 25.84  & 1.22  & 7314.01  & 77.12  & \textbf{1.07}  \\
        \cmidrule(lr){1-2}
    \multicolumn{1}{l}{Test(800)} & 16    & \textbf{8490.80 } & 142.76  & 9358.40  & 16.10  & 1.10  & 12654.40  & 41.48  & 1.49  & 10832.40  & 52.95  & 1.28  & 9305.20  & 170.48  & \textbf{1.10}  \\
        \cmidrule(lr){1-2}
    \multirow{3}[0]{*}{840 } & 10    & \textbf{7263.26 } & 121.71  & 7706.70  & 12.69  & 1.06  & 10354.30  & 24.57  & 1.43  & 8831.04  & 29.55  & 1.22  & 7751.76  & 87.18  & \textbf{1.07}  \\
          & 15    & \textbf{7430.19 } & 123.87  & 7914.28  & 12.90  & 1.07  & 10613.47  & 26.38  & 1.43  & 9065.38  & 32.19  & 1.22  & 7940.73  & 96.45  & \textbf{1.07}  \\
          & 20    & \textbf{7473.27 } & 125.71  & 7975.38  & 13.39  & 1.07  & 10691.87  & 28.21  & 1.43  & 9157.84  & 34.21  & 1.23  & 8002.13  & 105.39  & \textbf{1.07}  \\
        \cmidrule(lr){1-2}
    \multirow{3}[0]{*}{960 } & 10    & \textbf{7853.34 } & 128.60  & 8419.99  & 13.36  & 1.07  & 11226.44  & 30.05  & 1.43  & 9617.43  & 38.29  & 1.22  & 8404.25  & 117.25  & \textbf{1.07}  \\
          & 15    & \textbf{8027.90 } & 131.09  & 8633.87  & 13.56  & 1.08  & 11489.27  & 32.07  & 1.43  & 9859.35  & 41.46  & 1.23  & 8595.88  & 128.88  & \textbf{1.07}  \\
          & 20    & \textbf{8098.06 } & 133.40  & 8727.13  & 13.96  & 1.08  & 11593.41  & 34.13  & 1.43  & 9978.17  & 44.04  & 1.23  & 8677.21  & 140.23  & \textbf{1.07}  \\
        \cmidrule(lr){1-2}
    \multirow{3}[0]{*}{1080 } & 10    & \textbf{8459.15 } & 136.97  & 9152.01  & 14.01  & 1.08  & 12107.41  & 36.18  & 1.43  & 10426.12  & 48.69  & 1.23  & 9061.70  & 154.15  & \textbf{1.07}  \\
          & 15    & \textbf{8637.14 } & 140.02  & 9378.53  & 14.23  & 1.09  & 12386.76  & 38.47  & 1.43  & 10681.19  & 52.88  & 1.24  & 9260.47  & 168.47  & \textbf{1.07}  \\
          & 20    & \textbf{8718.08 } & 142.96  & 9489.61  & 14.52  & 1.09  & 12515.44  & 40.92  & 1.44  & 10812.06  & 56.33  & 1.24  & 9357.72  & 183.10  & \textbf{1.07}  \\
        \cmidrule(lr){1-2}
    \multirow{3}[0]{*}{1200 } & 10    & \textbf{9067.02 } & 146.87  & 9897.24  & 14.62  & 1.09  & 13007.48  & 43.20  & 1.43  & 11235.79  & 62.17  & 1.24  & 9719.64  & 200.69  & \textbf{1.07}  \\
          & 15    & \textbf{9250.84 } & 150.53  & 10124.14  & 14.79  & 1.09  & 13277.46  & 45.73  & 1.44  & 11482.59  & 66.84  & 1.24  & 9913.27  & 217.30  & \textbf{1.07}  \\
          & 20    & \textbf{9341.03 } & 154.13  & 10247.78  & 15.05  & 1.10  & 13417.59  & 48.32  & 1.44  & 11626.11  & 70.62  & 1.24  & 10015.98  & 233.78  & \textbf{1.07}  \\
        \bottomrule
    \end{tabular}%
\end{adjustbox}
\end{sidewaystable}%
\begin{sidewaystable}
\centering
\caption{S-shape strategy}
\label{Com_S}
	\begin{adjustbox}{scale=0.75,center}
    \begin{tabular}{crrrrrrrrrrrrrrr}
    \toprule
          &       & \multicolumn{2}{c}{AP2+VPG} & \multicolumn{3}{c}{ILS} & \multicolumn{3}{c}{Seed} & \multicolumn{3}{c}{CWI} & \multicolumn{3}{c}{CWII} \\
              \cmidrule(lr){3-4}
           \cmidrule(lr){5-7}
            \cmidrule(lr){8-10}
             \cmidrule(lr){11-13}
              \cmidrule(lr){14-16}
    \multicolumn{1}{l}{N} & \multicolumn{1}{l}{C} & \multicolumn{1}{l}{Obj} & \multicolumn{1}{l}{CPU} & \multicolumn{1}{l}{Obj} & \multicolumn{1}{l}{CPU} & \multicolumn{1}{l}{Obj ratio} & \multicolumn{1}{l}{Obj} & \multicolumn{1}{l}{CPU} & \multicolumn{1}{l}{Obj ratio} & \multicolumn{1}{l}{Obj} & \multicolumn{1}{l}{CPU} & \multicolumn{1}{l}{Obj ratio} & \multicolumn{1}{l}{Obj} & \multicolumn{1}{l}{CPU} & \multicolumn{1}{l}{Obj ratio} \\
    \hline
    \multirow{3}[0]{*}{480} & 10    & \textbf{5383.94 } & 109.50  & 6275.76  & 4.91  & 1.17  & 8430.08  & 4.31  & 1.57  & 6871.06  & 7.12  & 1.28  & 5979.76  & 22.52  & \textbf{1.11}  \\
          & 15    & \textbf{4834.33 } & 109.87  & 5672.26  & 5.84  & 1.17  & 7667.53  & 4.84  & 1.59  & 6239.29  & 6.99  & 1.29  & 5420.52  & 23.72  & \textbf{1.12}  \\
          & 20    & \textbf{4431.92 } & 110.03  & 5246.61  & 6.55  & 1.18  & 7031.87  & 5.30  & 1.59  & 5751.35  & 6.77  & 1.30  & 5019.21  & 24.90  & \textbf{1.13}  \\
    \cmidrule(lr){1-2}
    \multirow{3}[0]{*}{600} & 10    & \textbf{4993.35 } & 112.03  & 5903.82  & 6.29  & 1.18  & 7886.59  & 5.70  & 1.58  & 6345.16  & 39.42  & 1.27  & 5593.79  & 56.11  & \textbf{1.12}  \\
          & 15    & \textbf{5058.56 } & 113.26  & 6019.60  & 6.39  & 1.19  & 7998.79  & 6.26  & 1.58  & 6446.35  & 34.20  & 1.27  & 5657.56  & 55.75  & \textbf{1.12}  \\
          & 20    & \textbf{4968.91 } & 114.08  & 5947.26  & 6.72  & 1.20  & 7866.14  & 6.87  & 1.58  & 6349.91  & 30.75  & 1.28  & 5564.73  & 56.31  & \textbf{1.12}  \\
    \cmidrule(lr){1-2}
    \multirow{3}[0]{*}{720} & 10    & \textbf{5383.33 } & 116.05  & 6438.00  & 6.68  & 1.20  & 8529.64  & 7.35  & 1.58  & 6886.85  & 30.49  & 1.28  & 6007.23  & 61.49  & \textbf{1.12}  \\
          & 15    & \textbf{5511.58 } & 117.67  & 6595.83  & 6.86  & 1.20  & 8716.46  & 8.00  & 1.58  & 7041.03  & 30.02  & 1.28  & 6122.94  & 66.48  & \textbf{1.11}  \\
          & 20    & \textbf{5500.67 } & 119.01  & 6607.83  & 7.09  & 1.20  & 8686.68  & 8.73  & 1.58  & 7042.12  & 29.50  & 1.28  & 6104.12  & 71.18  & \textbf{1.11}  \\
    \cmidrule(lr){1-2}
    \multicolumn{1}{l}{Test(800)} & 16    & \textbf{6864.00 } & 141.78  & 8138.80  & 10.23  & 1.19  & 10771.00  & 16.14  & 1.57  & 9278.00  & 68.60  & 1.35  & 7484.00  & 144.19  & \textbf{1.09}  \\
    \cmidrule(lr){1-2}
    \multirow{3}[0]{*}{840} & 10    & \textbf{5866.16 } & 121.47  & 7058.51  & 7.09  & 1.20  & 9265.68  & 9.28  & 1.58  & 7488.64  & 31.67  & 1.28  & 6490.49  & 79.75  & \textbf{1.11}  \\
          & 15    & \textbf{6010.95 } & 123.78  & 7246.16  & 7.20  & 1.21  & 9474.76  & 9.99  & 1.58  & 7660.35  & 32.41  & 1.27  & 6632.17  & 88.82  & \textbf{1.10}  \\
          & 20    & \textbf{6030.68 } & 125.88  & 7292.67  & 7.49  & 1.21  & 9503.46  & 10.82  & 1.58  & 7693.20  & 33.20  & 1.28  & 6648.93  & 96.26  & \textbf{1.10}  \\
    \cmidrule(lr){1-2}
    \multirow{3}[0]{*}{960} & 10    & \textbf{6376.51 } & 129.30  & 7721.57  & 7.43  & 1.21  & 10044.63  & 11.44  & 1.58  & 8118.88  & 36.38  & 1.27  & 7000.15  & 107.90  & \textbf{1.10}  \\
          & 15    & \textbf{6526.80 } & 132.50  & 7911.98  & 7.60  & 1.21  & 10266.96  & 12.25  & 1.57  & 8311.73  & 39.01  & 1.27  & 7143.56  & 118.34  & \textbf{1.09}  \\
          & 20    & \textbf{6566.18 } & 135.38  & 7989.49  & 7.80  & 1.22  & 10327.14  & 13.18  & 1.57  & 8373.84  & 41.26  & 1.28  & 7180.76  & 129.00  & \textbf{1.09}  \\
    \cmidrule(lr){1-2}
    \multirow{3}[0]{*}{1080} & 10    & \textbf{6887.85 } & 139.17  & 8397.64  & 7.80  & 1.22  & 10841.29  & 13.86  & 1.57  & 8772.01  & 46.48  & 1.27  & 7514.32  & 241.88  & \textbf{1.09}  \\
          & 15    & \textbf{7041.82 } & 142.69  & 8594.43  & 7.92  & 1.22  & 11074.70  & 14.75  & 1.57  & 8976.42  & 51.47  & 1.27  & 7661.25  & 250.58  & \textbf{1.09}  \\
          & 20    & \textbf{7097.22 } & 146.01  & 8682.20  & 8.11  & 1.22  & 11149.21  & 15.77  & 1.57  & 9042.60  & 54.16  & 1.27  & 7711.17  & 259.30  & \textbf{1.09}  \\
    \cmidrule(lr){1-2}
    \multirow{3}[0]{*}{1200} & 10    & \textbf{7409.53 } & 150.28  & 9070.41  & 8.11  & 1.22  & 11641.02  & 16.51  & 1.57  & 9432.60  & 61.96  & 1.27  & 8031.03  & 271.89  & \textbf{1.08}  \\
          & 15    & \textbf{7561.31 } & 154.46  & 9280.85  & 8.21  & 1.23  & 11879.48  & 17.48  & 1.57  & 9633.64  & 65.90  & 1.27  & 8184.35  & 284.29  & \textbf{1.08}  \\
          & 20    & \textbf{7622.87 } & 158.41  & 9383.31  & 8.40  & 1.23  & 11970.60  & 18.61  & 1.57  & 9715.40  & 69.78  & 1.27  & 8243.50  & 296.97  & \textbf{1.08}  \\
        \bottomrule
    \end{tabular}%

\end{adjustbox}
\end{sidewaystable}%
Here, we compare our model with four commonly used heuristics for OBP. We plot line charts to visualize the domination of our approach on other algorithms under varying order sizes (See Figure \ref{Comfig}).

Table \ref{Com_R} and \ref{Com_S} show the test results under return and S-shape routing strategies. The Obj ratios are all comparison between AP2+VPG and other algorithms. In the comparison of solution quality in both tables, the AP2+VPG retained overall supremacy, which is about 10\% ahead of the second place, under all order numbers and capacity conditions. We can also find that The Seed and CWI are not very fit for this problem. The CWII remains the second-best heuristic in most cases, implying it is a highly powerful heuristic. This outcome is consistent with previous experiments published in the literature. The ILS is the third-best in most cases and slightly better than CWII in the scenarios of small order numbers and return strategy.

Taking computing time into consideration, the AP2+VPG grows slowly with the number of orders, from 110 seconds to around 150 seconds, implying that it is practical in real operation processes. This time cost is a good result for integrated branch-and-bound algorithms. The ILS is the fastest algorithm, but this does not compensate for its solution quality. The Seed and CWI algorithms show similar time costs of less than 60 seconds. The CWII shows a quadratic growth of computing time as the problem scale increases. This result is consistent with the description in \cite{de1999efficient}. 

\begin{figure}
	\centering
	\begin{minipage}{.5\textwidth}
		\centering
		\includegraphics[scale=0.55]{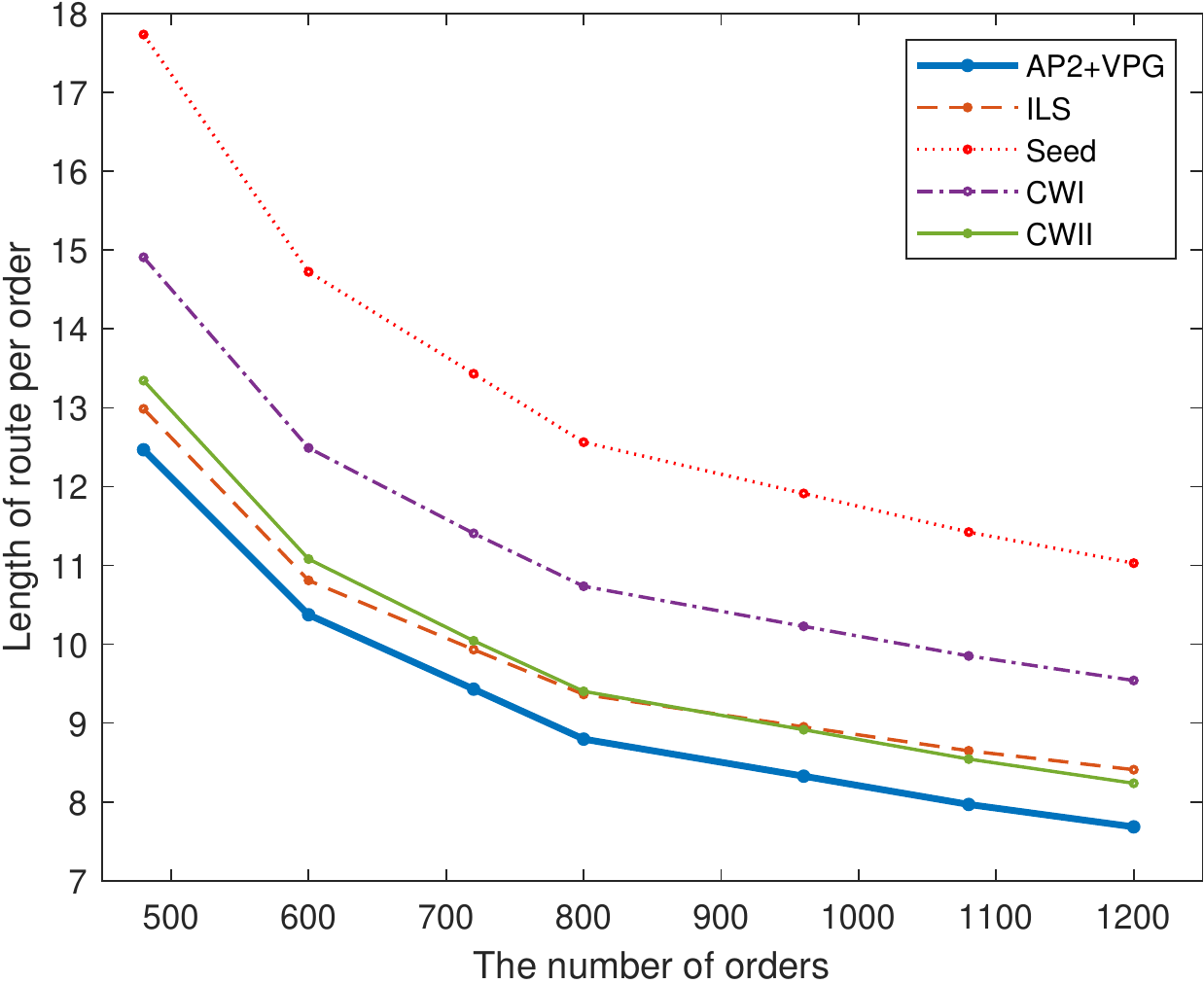}
		
	\end{minipage}%
	\begin{minipage}{.5\textwidth}
		\centering
		\includegraphics[scale=0.55]{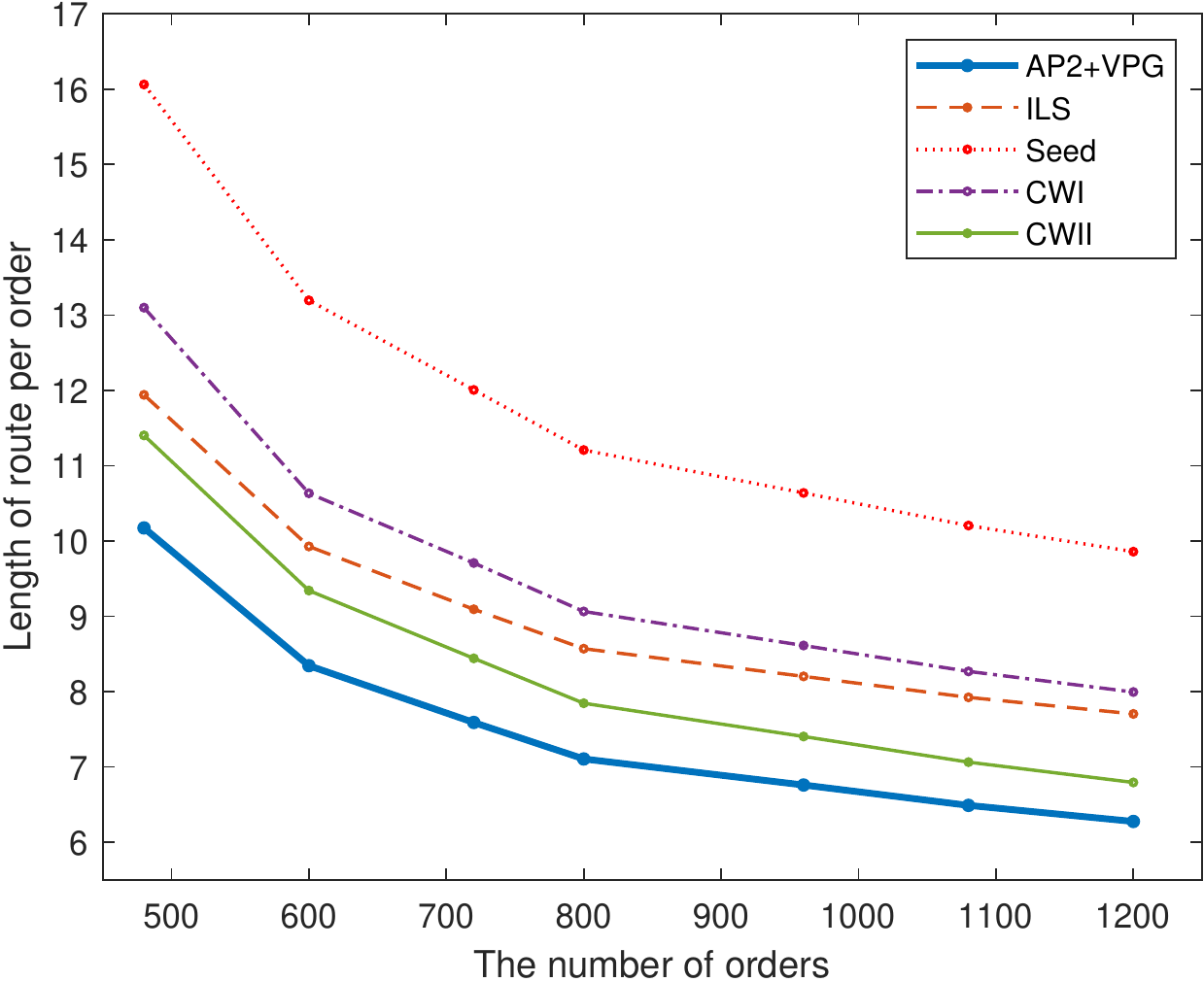}
		
	\end{minipage}
\caption{Average travel length per order comparison of Return routing strategy (left) and S-shape routing strategy (right).}
\label{Comfig}
\end{figure}

Finally, we can also find that the result of AP2 only is better than other algorithms, looking back to tables \ref{ComERS}. This also shows the effectiveness of our statistical analyses. Moreover, all algorithms perform better under the S-shape strategy, which means we can employ the S-shape strategy for routing in the real application of this problem.

\section{Conclusion}\label{con}
This paper proposes a new integrated order batching problem, the order batching and order packing problem provides the warehouse cost management in a supply chain with a more accurate perspective. To solve the new problem, this research offers a novel statistic-based framework to find an approximation MILP model. The approximation MILP model found in this framework gives high-quality solutions efficiently for medium and large-scale instances, compared with the proposed methods. Specifically, this method produces the most cost-saving order assignment in less than 150 seconds, which provides solid evidence that it can be implemented in software for practical operations. 

Future studies should concentrate on the strengthening and extension of this framework. There should be a general and non-problem-adaptive solution approach within this framework to find approximation MILP models. Moreover, this framework could be applied to other complex combinatorial optimization problems. In these problems, the complex problem settings and problem scale cause the exact MILP formulation unsolvable in a short time. This framework could provide an approximation MILP model that produces fast high-quality solutions.

\if0\blind{
\section*{Acknowledgements}
This work was funded by the National Nature Science Foundation of China under Grant No. 12071428 and 62111530247, and the Zhejiang Provincial Natural Science Foundation of China under Grant No. LZ20A010002.} \fi

\section*{Data availability statement}

The data that support the findings of this study are available on request from the corresponding author. The data are not publicly available due to that their containing information that could compromise the privacy of research participants.
\bibliographystyle{chicago}
\spacingset{1}
\bibliography{IISE-Trans}

\begin{thebibliography}{}

\bibitem[\protect\citeauthoryear{Ackerman}{Ackerman}{2012}]{ackerman2012practical}
Ackerman, K.~B. (2012).
\newblock {\em Practical handbook of warehousing}.
\newblock Springer Science \& Business Media.

\bibitem[\protect\citeauthoryear{Albareda-Sambola, Alonso-Ayuso, Molina, and
  De~Blas}{Albareda-Sambola et~al.}{2009}]{albareda2009variable}
Albareda-Sambola, M., A.~Alonso-Ayuso, E.~Molina, and C.~S. De~Blas (2009).
\newblock Variable neighborhood search for order batching in a warehouse.
\newblock {\em Asia-Pacific Journal of Operational Research\/}~{\em 26\/}(05),
  655--683.

\bibitem[\protect\citeauthoryear{Bartholdi and Hackman}{Bartholdi and
  Hackman}{2014}]{bartholdi2014warehouse}
Bartholdi, J.~J. and S.~T. Hackman (2014).
\newblock Warehouse \& distribution science: release 0.96.
\newblock {\em The Supply Chain and Logistics Institute\/}, 30332.

\bibitem[\protect\citeauthoryear{Bertsimas and Stellato}{Bertsimas and
  Stellato}{2021}]{bertsimas2021voice}
Bertsimas, D. and B.~Stellato (2021).
\newblock The voice of optimization.
\newblock {\em Machine Learning\/}~{\em 110\/}(2), 249--277.

\bibitem[\protect\citeauthoryear{Boysen, Fedtke, and Weidinger}{Boysen
  et~al.}{2018}]{boysen2018optimizing}
Boysen, N., S.~Fedtke, and F.~Weidinger (2018).
\newblock Optimizing automated sorting in warehouses: The minimum order spread
  sequencing problem.
\newblock {\em European Journal of Operational Research\/}~{\em 270\/}(1),
  386--400.

\bibitem[\protect\citeauthoryear{Chen and Wu}{Chen and
  Wu}{2005}]{chen2005association}
Chen, M.-C. and H.-P. Wu (2005).
\newblock An association-based clustering approach to order batching
  considering customer demand patterns.
\newblock {\em Omega\/}~{\em 33\/}(4), 333--343.

\bibitem[\protect\citeauthoryear{Clarke and Wright}{Clarke and
  Wright}{1964}]{clarke1964scheduling}
Clarke, G. and J.~W. Wright (1964).
\newblock Scheduling of vehicles from a central depot to a number of delivery
  points.
\newblock {\em Operations research\/}~{\em 12\/}(4), 568--581.

\bibitem[\protect\citeauthoryear{De~Koster, Van~der Poort, and
  Wolters}{De~Koster et~al.}{1999}]{de1999efficient}
De~Koster, M., E.~S. Van~der Poort, and M.~Wolters (1999).
\newblock Efficient orderbatching methods in warehouses.
\newblock {\em International Journal of Production Research\/}~{\em 37\/}(7),
  1479--1504.

\bibitem[\protect\citeauthoryear{De~Koster, Le-Duc, and Roodbergen}{De~Koster
  et~al.}{2007}]{de2007design}
De~Koster, R., T.~Le-Duc, and K.~J. Roodbergen (2007).
\newblock Design and control of warehouse order picking: A literature review.
\newblock {\em European journal of operational research\/}~{\em 182\/}(2),
  481--501.

\bibitem[\protect\citeauthoryear{De~Koster, Le-Duc, and Zaerpour}{De~Koster
  et~al.}{2012}]{de2012determining}
De~Koster, R.~B., T.~Le-Duc, and N.~Zaerpour (2012).
\newblock Determining the number of zones in a pick-and-sort order picking
  system.
\newblock {\em International Journal of Production Research\/}~{\em 50\/}(3),
  757--771.

\bibitem[\protect\citeauthoryear{Elsayed}{Elsayed}{1981}]{elsayed1981algorithms}
Elsayed, E.~A. (1981).
\newblock Algorithms for optimal material handling in automatic warehousing
  systems.
\newblock {\em The International Journal of Production Research\/}~{\em
  19\/}(5), 525--535.

\bibitem[\protect\citeauthoryear{Elsayed and Unal}{Elsayed and
  Unal}{1989}]{elsayed1989order}
Elsayed, E.~A. and O.~Unal (1989).
\newblock Order batching algorithms and travel-time estimation for automated
  storage/retrieval systems.
\newblock {\em The International Journal of Production Research\/}~{\em
  27\/}(7), 1097--1114.

\bibitem[\protect\citeauthoryear{Gademann and Velde}{Gademann and
  Velde}{2005}]{gademann2005order}
Gademann, N. and S.~Velde (2005).
\newblock Order batching to minimize total travel time in a parallel-aisle
  warehouse.
\newblock {\em IIE transactions\/}~{\em 37\/}(1), 63--75.

\bibitem[\protect\citeauthoryear{Gibson and Sharp}{Gibson and
  Sharp}{1992}]{gibson1992order}
Gibson, D.~R. and G.~P. Sharp (1992).
\newblock Order batching procedures.
\newblock {\em European journal of operational research\/}~{\em 58\/}(1),
  57--67.

\bibitem[\protect\citeauthoryear{Gu, Goetschalckx, and McGinnis}{Gu
  et~al.}{2007}]{gu2007research}
Gu, J., M.~Goetschalckx, and L.~F. McGinnis (2007).
\newblock Research on warehouse operation: A comprehensive review.
\newblock {\em European journal of operational research\/}~{\em 177\/}(1),
  1--21.

\bibitem[\protect\citeauthoryear{Henn, Koch, Doerner, Strauss, and
  W{\"a}scher}{Henn et~al.}{2010}]{henn2010metaheuristics}
Henn, S., S.~Koch, K.~F. Doerner, C.~Strauss, and G.~W{\"a}scher (2010).
\newblock Metaheuristics for the order batching problem in manual order picking
  systems.
\newblock {\em Business Research\/}~{\em 3\/}(1), 82--105.

\bibitem[\protect\citeauthoryear{Henn and W{\"a}scher}{Henn and
  W{\"a}scher}{2012}]{henn2012tabu}
Henn, S. and G.~W{\"a}scher (2012).
\newblock Tabu search heuristics for the order batching problem in manual order
  picking systems.
\newblock {\em European Journal of Operational Research\/}~{\em 222\/}(3),
  484--494.

\bibitem[\protect\citeauthoryear{Ho, Su, and Shi}{Ho
  et~al.}{2008}]{ho2008order}
Ho, Y.-C., T.-S. Su, and Z.-B. Shi (2008).
\newblock Order-batching methods for an order-picking warehouse with two cross
  aisles.
\newblock {\em Computers \& Industrial Engineering\/}~{\em 55\/}(2), 321--347.

\bibitem[\protect\citeauthoryear{Ho and Tseng}{Ho and
  Tseng}{2006}]{ho2006study}
Ho, Y.-C. and Y.-Y. Tseng (2006).
\newblock A study on order-batching methods of order-picking in a distribution
  centre with two cross-aisles.
\newblock {\em International Journal of Production Research\/}~{\em 44\/}(17),
  3391--3417.

\bibitem[\protect\citeauthoryear{Hompel and Schmidt}{Hompel and
  Schmidt}{2006}]{hompel2006warehouse}
Hompel, M. and T.~Schmidt (2006).
\newblock {\em Warehouse management: automation and organisation of warehouse
  and order picking systems}.
\newblock Springer Science \& Business Media.

\bibitem[\protect\citeauthoryear{Manzini}{Manzini}{2012}]{manzini2012warehousing}
Manzini, R. (2012).
\newblock {\em Warehousing in the global supply chain: Advanced models, tools
  and applications for storage systems}.
\newblock Springer.

\bibitem[\protect\citeauthoryear{Muter and {\"O}ncan}{Muter and
  {\"O}ncan}{2015}]{muter2015exact}
Muter, I. and T.~{\"O}ncan (2015).
\newblock An exact solution approach for the order batching problem.
\newblock {\em IIE Transactions\/}~{\em 47\/}(7), 728--738.

\bibitem[\protect\citeauthoryear{Shiau and Lee}{Shiau and
  Lee}{2010}]{shiau2010warehouse}
Shiau, J.-Y. and M.-C. Lee (2010).
\newblock A warehouse management system with sequential picking for
  multi-container deliveries.
\newblock {\em Computers \& Industrial Engineering\/}~{\em 58\/}(3), 382--392.

\bibitem[\protect\citeauthoryear{Tang, Wang, Liu, and Liu}{Tang
  et~al.}{2011}]{tang2011combination}
Tang, L., G.~Wang, J.~Liu, and J.~Liu (2011).
\newblock A combination of lagrangian relaxation and column generation for
  order batching in steelmaking and continuous-casting production.
\newblock {\em Naval Research Logistics (NRL)\/}~{\em 58\/}(4), 370--388.

\end{thebibliography}
	
\end{document}